\numberwithin{equation}{section}
\newtheorem{theorem}{Theorem}
\newtheorem{lemma}[theorem]{Lemma}
\theoremstyle{remark}
\theoremstyle{definition}
\numberwithin{theorem}{section} 
\numberwithin{equation}{section}
\numberwithin{table}{section}
\newcommand{\re}{\textnormal{Re}}
\newcommand{\im}{\textnormal{Im}}
\renewcommand{\(}{\left(}
\renewcommand{\)}{\right)}
\begin{document}
\title[Equivalent criterion for the grand Riemann hypothesis]{Equivalent criterion for the grand Riemann hypothesis associated to Maass cusp forms}
\author{Soumyarup Banerjee}
\address{Discipline of Mathematics, Indian Institute of Technology Gandhinagar, Palaj, Gandhinagar-382355, Gujarat, India.}
\email{soumyarup.b@iitgn.ac.in}
\author{Rahul Kumar}
\address{Center for Geometry and Physics, Institute for Basic Science (IBS), Pohang 37673, Republic of Korea.}
\email{rahul@ibs.re.kr}
\thanks{2020 \textit{Mathematics Subject Classification.} Primary : 11F30, 11F66, 11M26, Secondary : 11F12, 33C10\\
\textit{Keywords and phrases.} Maass cusp form, Grand Riemann Hypothesis, $L$-function associated to Maass cusp form, Equivalent criterion, Bessel function}

\medskip
\begin{abstract}
In this article, we obtain transformation formulas analogous to the identity of Ramanujan, Hardy and Littlewood in the setting of primitive Maass cusp form over the congruence subgroup $\Gamma_0(N)$ and also provide an equivalent criterion of the grand Riemann hypothesis for the $L$-function associated to the primitive Maass cusp form over $\Gamma_0(N)$.
\end{abstract}

\thanks{}
\maketitle

\section{Introduction}\label{Introduction}
Let $f$ be a primitive Maass cusp form over the congruence subgroup $\Gamma_0(N) \subset \Gamma = SL_2(\mathbb{Z})$ with $\Delta f = (\frac{1}{4}+\nu^2)f$, where $\Delta$ is the non-Euclidean Laplacian operator
$$\Delta = -y^2\(\frac{\partial^2}{\partial^2 x}+\frac{\partial^2}{\partial^2 y}\right).$$
Since the Laplacian is self-adjoint, any eigenvalue associated to $\Delta$ must be a positive real number, and therefore $\nu$ is either real or of the form $it$ with $t$ real and $-1/2<t<1/2$. 


We denote the Fourier coefficients of $f$ at cusp infinity by $\lambda_f(n)$ :
\begin{equation*}
f(z) = y^{1/2}\sum_{n\neq 0} \lambda_f(n) K_{i\nu}(2\pi|n|y)e(nx)
\end{equation*}
normalized by setting $\lambda_f(1)=1$. Here $\lambda_f(n)$ is then the $n$th Hecke eigenvalue of $f$, $K_{i\nu}$ is the modified Bessel function of second kind, $z=x+iy$ and $e(x) = e^{2\pi i x}$. The Ramanujan conjecture asserts that $|\lambda_f(n)| = \mathcal{O}(n^\epsilon)$ for any positive $\epsilon$. In this direction, to the best our knowledge, the best known bound for the Fourier coefficients was given by Kim and Sarnak \cite{Kim}, which is
\begin{equation}\label{Sarnak bound}
|\lambda_f(n)| = \mathcal{O}\(n^{7/64+\epsilon}\)
\end{equation}
for any positive $\epsilon$.

Let $\rho : \mathbb{H} \to \mathbb{H}$ be the antiholomorphic involution given by $\rho(x+iy) = -x+iy$. We call a Maass cusp form $f$ even if $f \circ \rho = f$ and odd if $f \circ \rho = -f$. We may attach an automorphic $L$-function associated to the primitive Maass cusp form $f$ over $\Gamma_0(N)$  as
\begin{equation}\label{L-function}
L(s, f) := \sum_{n=1}^\infty \frac{\lambda_f(n)}{n^s}, \hspace{1cm} \re(s)>1.
\end{equation}
It has an Euler product of the form (cf. \cite[p. 208]{Liu})
\begin{equation}\label{Euler product}
L(s, f) = \prod_p \left(1-\frac{\lambda_f(p)}{p^s} + \frac{\chi_0(p)}{p^{2s}} \right)^{-1}
\end{equation}
for $\re(s)>1$, where $\chi_0$ denotes the principle character modulo $N$. The $L$-function $L(s, f)$ can be analytically continued to the entire complex plane. The completed $L$-function associated to $f$ can be defined as
\begin{equation*}
\Lambda(s, f) = L_{\infty}(s, f) L(s, f),
\end{equation*}
where 
\begin{equation}\label{Gamma factors in FE}
L_{\infty}(s, f) := \pi^{-s} \Gamma\(\frac{s+\epsilon+i\nu}{2} \right) \Gamma\(\frac{s+\epsilon-i\nu}{2} \right).
\end{equation}
Here $\epsilon$ takes the value $0$ for $f$ even and $1$ for $f$ odd. The completed $L$-function $\Lambda(s, f)$ satisfies the functional equation \cite[p. 208]{Liu}
\begin{equation}\label{Functional equation}
\Lambda(s, f) = \epsilon_f N^{1/2-s} \Lambda(1-s, f),
\end{equation} 
where $\epsilon_f$ is a complex number of modulus $1$. The non-trivial zeros of $L(s, f)$ lie in the critical strip $0<\re(s)<1$ and, by the grand Riemann hypothesis, are conjectured to be on the critical line $\re(s) = \frac{1}{2}$. 
The goal of this article is to obtain a transformation formula involving the non-trivial zeros of $L(s, f)$ and to find an equivalent criterion of the grand Riemann hypothesis for $L(s, f)$.

In the classical case, the problem was initiated by Ramanujan. He showed an elegant transformation formula to Hardy involving an infinite series of the M{\"o}bius function  \cite[p. 312]{Ramanujan}, during his stay at Trinity. Later, the identity was corrected by Hardy and Littlewood in \cite[p. 156, Section 2.5]{Hardy}, which precisely states that for $\alpha$ and $\beta$ being two positive real numbers with $\alpha \beta = \pi$, the following identity holds :
\begin{equation}\label{Ramanujan-Hardy-Littlewood identity}
\sqrt{\alpha}\sum_{n=1}^\infty \frac{\mu(n)}{n}e^{-\alpha^2/n^2} - \sqrt{\beta}\sum_{n=1}^\infty \frac{\mu(n)}{n}e^{-\beta^2/n^2} = - \frac{1}{2\sqrt{\beta}}\sum_\rho\frac{\Gamma\( \frac{1-\rho}{2}\right)}{\zeta'(\rho)}\beta^{\rho},
\end{equation}
provided the infinite series on the right hand side of the above equation converges where the sum is running over the non-trivial zeros $\rho$ of the Riemann zeta function with an assumption that the non-trivial zeros are simple. For more details, one can look into \cite[p. 467--468]{Berndt}.

The series on the right hand side of \eqref{Ramanujan-Hardy-Littlewood identity} converges if we bracket the terms of the series in such a way that the terms for which
\begin{equation*}
|\im(\rho) - \im(\rho')| < \exp \(-c \frac{\im(\rho) }{\log(\im(\rho))} \right) + \exp \(-c \frac{\im(\rho') }{\log(\im(\rho'))} \right)
\end{equation*}
are included in the same bracket (cf. \cite[p. 220]{Titchmarsh}) but the convergence of the series  is still unknown unconditionally. 

The identity \eqref{Ramanujan-Hardy-Littlewood identity} leads Hardy and Littlewood \cite{Hardy} to obtain an equivalent criterion of the Riemann hypothesis for $\zeta(s)$, which precisely states that for the function $P(y) := \sum_{n=1}^\infty \frac{(-y)^n}{n!\zeta(2n+1)}$, the bound $P(y) = \mathcal{O}(y^{-1/4+\delta})$ as $y\to \infty$ for any positive $\delta$ is equivalent to the Riemann hypothesis. This equivalent criterion for the Riemann hypothesis is sometimes known as Riesz-type criterion since Riesz \cite{Riesz} was the first mathematician to find the similar result around the same time.

Later, several analogues and generalizations have been investigated in different directions. For instance, the similar problem involving an extra complex variable was considered in \cite{Dixit2}. The problem in the setting of Dirichlet $L$-function and Dedekind zeta function has been studied in \cite{Dix} and \cite{Dixit3} respectively. In \cite{Dixit}, Dixit et al. have obtained an identity analogous to \eqref{Ramanujan-Hardy-Littlewood identity} for holomorphic Hecke eigenforms and found the similar criterion of the Riemann hypothesis for the associated $L$-function. Recently, in \cite{Agarwal}, one variable generalization of \eqref{Ramanujan-Hardy-Littlewood identity} have been studied in different direction.

We next provide the analogues of \eqref{Ramanujan-Hardy-Littlewood identity} for the non-holomorphic primitive Maass cusp form over the congruence subgroup $\Gamma_0(N)\subset \Gamma$. The reciprocal of $L(s, f)$ for $\re(s)>1$ can be written as 
\begin{equation*}
\frac{1}{L(s, f)} = \sum_{n=1}^\infty \frac{\widetilde{\lambda}_f(n)}{n^s}.
\end{equation*} 
where
\begin{equation}\label{Lambda tilda}
\widetilde{\lambda}_f(n) = \begin{cases}
\mu(d)\chi_0(D)\lambda_f(d) & \text{ if } n=dD^2, \, (d, D) = 1 \text{ and } d, D \text{ squarefree} \\
0 & \text{ otherwise. }
\end{cases}
\end{equation}
The above expression of $\widetilde{\lambda}_f(n)$ can be derived mainly from the Euler product of $L(s, f)^{-1}$, which follows from \eqref{Euler product} that 
\begin{equation}\label{Euler product of reciprocal}
\sum_{n=1}^\infty \frac{\widetilde{\lambda}_f(n)}{n^s} = \prod_p \(1- \frac{\lambda_f(p)}{p^s}+\frac{\chi_0(p)}{p^{2s}} \right).
\end{equation}
The multiplicativity of $\widetilde{\lambda}_f(n)$ and \eqref{Euler product of reciprocal} together implies that for $p^m | n$ with $m\geq 3$, we have $\widetilde{\lambda}_f(n) = 0$. Therefore, from the fact that both $\lambda_f(n)$ and $\chi_0(n)$ are multiplicative, we obtain \eqref{Lambda tilda}.

We next define two functions
\begin{align}\label{Odd function}
\mathcal{P}_{f_o} (y) := \sum_{n=1}^{\infty} \frac{\widetilde{\lambda}_f(n)}{n^2} K_{i\nu}\left(\frac{2\pi y}{n}\right)
\end{align}
and
\begin{align}\label{even function}
\mathcal{P}_{f_e}(y) := \sum_{n=1}^{\infty} \frac{\widetilde{\lambda}_f(n)}{n} \left[2K_{i\nu}\left(\frac{2\pi y}{n}\right)-\frac{\Gamma(i\nu)}{(\pi y/n)^{i\nu}} - \frac{\Gamma(-i\nu)}{(\pi y/n)^{-i\nu}}\right]. 
\end{align}
In the following theorem, we obtain transformation formulas involving the non-trivial zeros of $L(s, f)$.
\begin{theorem}\label{Hardy type identity}
Let $f$ be a normalized primitive Maass cusp form over the congruence subgroup $\Gamma_0(N)$ with eigenvalue $1/4+\nu^2$. Let $L(s, f)$ be its associated $L$-function as defined in \eqref{L-function} with an assumption that all its non-trivial zeros are simple. Let $\epsilon_f$ be the complex number of modulus $1$ appeared in \eqref{Functional equation}. If $\alpha$ and $\beta$ are any positive numbers with $\alpha\beta = 1/N$. Then
\begin{itemize}
\item[{\rm (a)}] for $f$ odd, we have
\begin{equation}\label{Odd identity}
\alpha^{3/2} \mathcal{P}_{f_o} (\alpha) - \epsilon_f \beta^{3/2} \mathcal{P}_{f_o} (\beta) = -\frac{\epsilon_f}{4\pi}\sum_{\rho} \frac{L_\infty(1-\rho, f)}{L'(\rho, f)} \beta^{\rho-\frac{1}{2}},
\end{equation}
\item[{\rm (b)}] for $f$ even, we have
\begin{align}\label{Even identity}
\sqrt{\alpha} \mathcal{P}_{f_e}(\alpha) - \epsilon_f \sqrt{\beta} \mathcal{P}_{f_e}(\beta) = \frac{\epsilon_f \pi^{i\nu}\Gamma(-i\nu)}{L(-i\nu,f)}\beta^{\frac{1}{2}+i\nu}+\frac{\epsilon_f \pi^{-i\nu}\Gamma(i\nu)}{L(i\nu,f)}\beta^{\frac{1}{2}-i\nu} 
-\epsilon_f\sum_\rho \frac{L_\infty(1-\rho, f)}{L'(\rho, f)} \beta^{\rho-\frac{1}{2}},
\end{align}
\end{itemize}
provided the series on the right-hand of \eqref{Odd identity} and \eqref{Even identity} converge where $\rho$ runs through the non-trivial zeros of $L(s, f)$.
\end{theorem}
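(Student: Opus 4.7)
The plan is the Mellin--Barnes contour argument behind the classical Ramanujan--Hardy--Littlewood identity \eqref{Ramanujan-Hardy-Littlewood identity}, adapted to the Maass-form setting. For part (a) I start from the Mellin--Barnes representation
\begin{equation*}
K_{i\nu}(2\pi x)=\frac{1}{8\pi i}\int_{(c)}\Gamma\!\left(\frac{s+i\nu}{2}\right)\Gamma\!\left(\frac{s-i\nu}{2}\right)(\pi x)^{-s}\,ds,\qquad c>|\re(i\nu)|,
\end{equation*}
specialise it to $x=\alpha/n$, and substitute into \eqref{Odd function}. The Kim--Sarnak bound \eqref{Sarnak bound} lets me interchange sum and integral on any vertical line $(c)$ with $\max(0,|\re(i\nu)|)<c<1-7/64$, after which \eqref{Euler product of reciprocal} collapses the Dirichlet series $\sum_n \widetilde\lambda_f(n)n^{s-2}$ to $1/L(2-s,f)$ and gives
\begin{equation*}
\mathcal{P}_{f_o}(\alpha)=\frac{1}{8\pi i}\int_{(c)}\Gamma\!\left(\frac{s+i\nu}{2}\right)\Gamma\!\left(\frac{s-i\nu}{2}\right)(\pi\alpha)^{-s}\,\frac{ds}{L(2-s,f)}.
\end{equation*}

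I then shift the contour to the parallel line $(3-c)$. Between the two lines the only poles of the integrand are simple poles at $s=2-\rho$ produced by the non-trivial zeros of $L(s,f)$: the Gamma poles lie strictly to the left of $(c)$, and for $c$ near $1/2$ the trivial-zero poles of $1/L(2-s,f)$ (at $s=3\pm i\nu+2k$ for $\epsilon=1$) lie strictly to the right of $(3-c)$. Using $\pi^{\rho-1}\Gamma(\frac{2-\rho+i\nu}{2})\Gamma(\frac{2-\rho-i\nu}{2})=L_\infty(1-\rho,f)$ (valid for $\epsilon=1$) together with $\Res_{s=2-\rho}L(2-s,f)^{-1}=-L'(\rho,f)^{-1}$, each residue contributes a constant multiple of $L_\infty(1-\rho,f)L'(\rho,f)^{-1}\alpha^{\rho-2}$. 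On the shifted line I substitute $s=3-u$ and apply the functional equation \eqref{Functional equation} to $1/L(u-1,f)$; the Gamma factors of $L_\infty(2-u,f)$ exactly cancel $\Gamma(\frac{3-u\pm i\nu}{2})$, and the remaining $L_\infty(u-1,f)=\pi^{1-u}\Gamma(\frac{u+i\nu}{2})\Gamma(\frac{u-i\nu}{2})$ reconstitutes the Mellin--Barnes kernel of $\mathcal{P}_{f_o}(\beta)$. The scaling factors of $\pi$ and $N$ collapse via $\alpha\beta=1/N$ and $\epsilon_f^{2}=1$, so the shifted integral is a concrete scalar multiple of $\beta^{3}\mathcal{P}_{f_o}(\beta)$; multiplying the whole identity by $\alpha^{3/2}$ turns this multiple into exactly $\epsilon_f\beta^{3/2}\mathcal{P}_{f_o}(\beta)$. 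A final re-indexing $\rho\mapsto 1-\rho$ in the residue sum, using the identity $L_\infty(\rho,f)L'(\rho,f)=-\epsilon_f N^{1/2-\rho}L_\infty(1-\rho,f)L'(1-\rho,f)$ obtained by differentiating \eqref{Functional equation} at a simple non-trivial zero, replaces $\alpha^{\rho-1/2}$ by $\beta^{\rho-1/2}$ (absorbing the compensating $N^{\rho-1/2}$ factor) and yields \eqref{Odd identity}.

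For part (b) the scheme is identical; only the starting Mellin--Barnes representation changes. I replace $K_{i\nu}$ by the regularised identity
\begin{equation*}
2K_{i\nu}(2\pi x)-\Gamma(i\nu)(\pi x)^{-i\nu}-\Gamma(-i\nu)(\pi x)^{i\nu}=\frac{1}{4\pi i}\int_{(c')}\Gamma\!\left(\frac{s+i\nu}{2}\right)\Gamma\!\left(\frac{s-i\nu}{2}\right)(\pi x)^{-s}\,ds,
\end{equation*}
valid for $c'<-|\re(i\nu)|$, obtained by sliding the $K_{i\nu}$-contour past the rightmost Gamma poles $s=\pm i\nu$ and subtracting their residues; the two subtracted terms on the left are exactly the regularisation built into \eqref{even function}. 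Specialising $x=\alpha/n$ and summing produces an integral representation of $\mathcal{P}_{f_e}(\alpha)$ with $1/L(1-s,f)$ in place of $1/L(2-s,f)$. Shifting from $\re(s)=c'$ to $\re(s)=1-c'$ now picks up, besides the non-trivial-zero residues at $s=1-\rho$, two extra residues at the Gamma poles $s=\pm i\nu$ lying in this wider strip (since $|\re(i\nu)|<1/2$). After the $\alpha\leftrightarrow\beta$ conversion these Gamma residues produce exactly the two non-$\rho$ terms in \eqref{Even identity}; the functional-equation step and the remaining algebra are formally identical to part (a), now identifying the shifted integral with a scalar multiple of $\sqrt{\beta}\mathcal{P}_{f_e}(\beta)$.

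The principal obstacle I anticipate is the convergence of the series over non-trivial zeros on the right-hand side: as remarked after \eqref{Ramanujan-Hardy-Littlewood identity}, convergence holds only after Backlund-type bracketing, and justifying both this and the vanishing of the horizontal pieces in the contour shift will require Stirling-type decay of $L_\infty(1-\rho,f)$ together with a Titchmarsh-type lower bound on $|L'(\rho,f)|$ (the simplicity of the non-trivial zeros, assumed in the theorem, is what makes the latter tractable). Once these analytic estimates are in place, the rest of the argument is largely bookkeeping.
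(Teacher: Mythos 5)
Your proposal is, at its core, the same Mellin--Barnes contour argument the paper uses: represent the Bessel kernel by the inverse Mellin transform \eqref{imt of kinu}, sum termwise to produce $1/L$, shift the contour across the critical strip collecting residues at the non-trivial zeros (plus, in the even case, the residues at $s=\pm i\nu$ that generate the regularised kernel \eqref{Inverse Mellin in even case} and the two extra terms), and use the functional equation to identify the shifted integral with the $\beta$-series. The residue computations and pole bookkeeping you give check out.

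The one substantive difference is the order of operations, and it has a consequence worth flagging. The paper applies the functional equation \eqref{Functional equation} \emph{before} shifting, so the residues at the non-trivial zeros come out directly as $L_\infty(1-\rho,f)L'(\rho,f)^{-1}\beta^{\rho}$ and only a single factor of $\epsilon_f$ ever appears; \eqref{Odd identity} then follows by pure rescaling. You apply it \emph{after} shifting, so your residues carry $\alpha^{\rho-2}$ and your $\beta$-term carries $\epsilon_f^{-1}$ rather than $\epsilon_f$; you then need the re-indexing $\rho\mapsto 1-\rho$ via $L_\infty(\rho,f)L'(\rho,f)=-\epsilon_f N^{1/2-\rho}L_\infty(1-\rho,f)L'(1-\rho,f)$ (which is correct) \emph{and} the identity $\epsilon_f^{2}=1$. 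The latter is true for these self-dual newforms with trivial nebentypus, but it is not among the paper's stated hypotheses ($\epsilon_f$ is only assumed to have modulus $1$), so your route quietly uses an extra fact that the paper's arrangement avoids needing. Also, for the vanishing of the horizontal integrals the relevant estimate is a lower bound on $|L(\sigma+iT,f)|$ along a suitable sequence of heights $T$ avoiding ordinates of zeros (the paper's Lemma \ref{Lower bound of L(s, f)}), not a lower bound on $|L'(\rho,f)|$; the latter is only relevant to the (assumed) convergence of the residue series.
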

The next theorem provides an equivalent criterion of the grand Riemann hypothesis for $L(s, f)$ when $f$ is odd, motivated from the above theorem. 
\begin{theorem}\label{Equivalent criterion}
If $f$ is an odd normalized primitive Maass cusp form over the congruence subgroup $\Gamma_0(N)$ with eigenvalue $1/4+\nu^2$. Then the bound $\mathcal{P}_{f_o} (y) = \mathcal{O}\(y^{-3/2+\delta}\right)$  
as $y \to \infty$ for every positive $\delta$ is equivalent to the grand Riemann hypothesis for $L(s, f)$.
\end{theorem}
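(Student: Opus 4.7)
The plan is to characterize the size of $\mathcal{P}_{f_o}(y)$ through its Mellin transform and then to read off the equivalence with GRH from where that transform ceases to be holomorphic. First I would compute, in an initial vertical strip where everything converges absolutely, the Mellin transform
\[
\Phi(s):=\int_0^\infty \mathcal{P}_{f_o}(y)\,y^{s-1}\,dy.
\]
Starting from the standard evaluation $\int_0^\infty K_{i\nu}(2\pi y/n)\,y^{s-1}\,dy = \tfrac{n^s}{4\pi}\,L_\infty(s-1,f)$ (for the archimedean factor $L_\infty$ from \eqref{Gamma factors in FE} with $\epsilon=1$, since $f$ is odd), and using the Kim--Sarnak bound \eqref{Sarnak bound} to justify the interchange of sum and integral in the strip $|\re(i\nu)|<\re(s)<1$, I would obtain
\[
\Phi(s)\;=\;\frac{1}{4\pi}\cdot\frac{L_\infty(s-1,f)}{L(2-s,f)}.
\]
A separate look at the small-argument behaviour of $K_{i\nu}$ also gives a crude pointwise bound $\mathcal{P}_{f_o}(y)=O(y^{-|\re(i\nu)|})$ as $y\to 0^+$, which will be needed in the converse direction.

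For the forward implication, I would assume GRH for $L(s,f)$, apply Mellin inversion to recover $\mathcal{P}_{f_o}$, and shift the contour from $c\in(|\re(i\nu)|,1)$ to $\re(s)=3/2-\delta$. Under GRH the non-trivial zeros of $L(2-s,f)$ sit on $\re(s)=3/2$, the trivial zeros lie in $\re(s)\geq 3$, and $L_\infty(s-1,f)$ has no poles in this strip, so no residues are encountered. On the shifted contour $|y^{-s}|=y^{-3/2+\delta}$, and what remains is to verify absolute convergence of the contour integral of $|\Phi(s)|$. Stirling's formula makes $L_\infty(s-1,f)$ decay exponentially in $|\im(s)|$, while GRH-conditional log-type bounds for $1/L(s,f)$ just to the right of the critical line are comfortably absorbed by that decay. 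The estimate $\mathcal{P}_{f_o}(y)=O(y^{-3/2+\delta})$ then follows.

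For the converse I would assume the bound and use it to widen the strip of analyticity of $\Phi$. Combined with the $y\to 0^+$ estimate from the first paragraph, the defining integral for $\Phi(s)$ converges absolutely, and hence $\Phi$ is holomorphic, on $|\re(i\nu)|<\re(s)<3/2$; by analytic continuation from the original strip the identity $\Phi(s)=\tfrac{1}{4\pi}\,L_\infty(s-1,f)/L(2-s,f)$ persists throughout this larger strip. Since $L_\infty(s-1,f)$ is nowhere vanishing there, $L(2-s,f)$ cannot vanish in it, i.e.\ $L(w,f)$ has no zeros with $1/2<\re(w)<2-|\re(i\nu)|$. Within the critical strip this rules out every non-trivial zero with $1/2<\re(w)<1$, and the functional equation \eqref{Functional equation} reflects this to $0<\re(w)<1/2$, forcing every non-trivial zero onto $\re(w)=1/2$.

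The technical heart of the argument is the forward direction: one needs GRH-conditional upper bounds for $1/L(s,f)$ on vertical lines just to the right of the critical strip that grow at most polynomially (indeed logarithmically) in $|\im(s)|$, so that Stirling's exponential decay of the archimedean factor dominates and the contour integral is absolutely convergent. Such bounds are standard in the Maass setting, but they are the one place where the argument relies on non-trivial GRH-conditional analytic estimates; the remaining ingredients (the Mellin computation, the small-$y$ asymptotic, and the bookkeeping of the contour shift) are routine.
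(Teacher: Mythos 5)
Your proposal is correct, and one half of it genuinely diverges from the paper. For the direction ``bound $\Rightarrow$ GRH'' you do essentially what the paper does: establish the Mellin identity $\Phi(s)=\tfrac{1}{4\pi}L_\infty(s-1,f)/L(2-s,f)$ (this is Lemma \ref{MT of p} after the substitution $s\mapsto 1-s$), use the assumed decay at $y\to\infty$ together with a crude $y\to 0^+$ bound (the paper's Lemma \ref{bound on P}; your $O(y^{-|\re(i\nu)|})$ is even slightly sharper) to continue $\Phi$ analytically across the wider strip, and conclude non-vanishing of $L$ from the non-vanishing of the Gamma factors plus the functional equation. For the direction ``GRH $\Rightarrow$ bound'' your route is different: you invert the Mellin transform and push the contour to $\re(s)=3/2-\delta$, which under GRH crosses no zeros of $L(2-s,f)$, and you control the shifted and horizontal integrals by Stirling decay of $L_\infty(s-1,f)$ against GRH-conditional bounds $1/L(\sigma+it,f)\ll_\varepsilon(|t|+2)^{\varepsilon}$ for $\sigma\ge\tfrac12+\delta$. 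The paper instead never re-enters the contour picture: it uses the GRH-conditional estimate $\sum_{n\le x}\widetilde{\lambda}_f(n)=O(x^{1/2+\delta})$ from \cite[Prop.\ 5.14]{Iwaniec}, splits $\mathcal{P}_{f_o}$ at $\ell=\lfloor y^{1-\delta}\rfloor$, and handles the tail by Abel summation and the mean value theorem and the head by the exponential decay of $K_{i\nu}$. The two GRH-conditional inputs are of comparable depth, and your contour argument is arguably more transparent about where the exponent $3/2$ comes from (it is the abscissa of the reflected zeros); but you should make explicit that the $1/L$ bounds you invoke (e.g.\ \cite[Thm.\ 5.19]{Iwaniec}) apply to Maass-form $L$-functions, where Ramanujan is open and one must check that the Kim--Sarnak bound \eqref{Sarnak bound} suffices --- it does, and this is the only point in your write-up that is asserted rather than reduced to a precise citation. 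The paper's partial-summation route avoids that issue entirely and, in the even case treated in \S\ref{Even criterion}, has the added benefit of extracting the explicit secondary main terms, which a pure $O$-bound from a contour shift would not give.
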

Now we consider the remaining case when $f$ is even. In obtaining the criterion of the grand Riemann hypothesis for $L(s, f)$ for $f$ even, we need to consider the derivative of the function $P_{f_e}(y)$, unlike to the case when $f$ is odd. The reason behind the above fact is technical, which is mainly due to the poles coming from the gamma factors involved in the functional equation \eqref{Functional equation} of $L(s, f)$. Let $\mathcal{Q}_{f_e} (y)$ be the derivative of the function $P_{f_e}(y)$. Thus it follows from the derivative of the K-Bessel function \cite[p.~36, Formula 1.14.1.1]{Brychkov} that
\begin{align}\label{Q function}
\mathcal{Q}_{f_e} (y) = \pi \sum_{n=1}^\infty \frac{\widetilde{\lambda}_f(n)}{n^2} \left[ 2K_{1+i\nu}\left(\frac{2\pi y}{n}\right)+2K_{1-i\nu}\left(\frac{2\pi y}{n}\right) - \frac{\Gamma(1+i\nu)}{(\pi y/n)^{1+i\nu}} - \frac{\Gamma(1-i\nu)}{(\pi y/n)^{1-i\nu}}\right].
\end{align}
\begin{theorem}\label{Equivalent criterion even}
Let $f$ be an even normalized primitive Maass cusp form over the congruence subgroup $\Gamma_0(N)$ with eigenvalue $1/4+\nu^2$. Then 
\begin{itemize}
\item[{\rm (a)}] The bound $\mathcal{Q}_{f_e} (y) = \mathcal{O}\(y^{-3/2+\delta}\right)$ for any $\delta>0$ implies the grand Riemann hypothesis for $L(s, f)$.
\item[{\rm (b)}] If the grand Riemann hypothesis for $L(s, f)$ is true, then as $y\to \infty$,
\begin{equation}\label{Bound on Q}
\mathcal{Q}_{f_e} (y) = -\pi \sum_{n=1}^{[y^{1-\epsilon}]-1}\frac{\widetilde{\lambda}_f(n)}{n^2} \left\lbrace \frac{\Gamma(1+i\nu)}{(\pi y/n)^{1+i\nu}} + \frac{\Gamma(1-i\nu)}{(\pi y/n)^{1-i\nu}} \right\rbrace + \mathcal{O}\left(y^{-3/2+\delta}\right)
\end{equation}
and
\begin{equation*}
\mathcal{P}_{f_e} (y) = - \sum_{n=1}^{[y^{1-\epsilon}]-1}\frac{\widetilde{\lambda}_f(n)}{n} \left\lbrace \frac{\Gamma(i\nu)}{(\pi y/n)^{i\nu}} + \frac{\Gamma(-i\nu)}{(\pi y/n)^{-i\nu}} \right\rbrace + \mathcal{O}\left(y^{-1/2+\delta}\right).
\end{equation*}
\end{itemize}
\end{theorem}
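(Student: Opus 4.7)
The plan is to realise $\mathcal{Q}_{f_e}(y)$ as an inverse Mellin transform of $\tfrac12(u-1)L_{\infty}(u-1,f)/L(2-u,f)$ and then read off both parts of the theorem from the analytic properties of this kernel. To set this up I would first derive an integral representation for $\mathcal{P}_{f_e}(y)$. The Mellin--Barnes formula
\[
2K_{i\nu}(x)=\tfrac{1}{2}\cdot\tfrac{1}{2\pi i}\int_{(c_0)}\Gamma\!\left(\tfrac{s+i\nu}{2}\right)\Gamma\!\left(\tfrac{s-i\nu}{2}\right)(x/2)^{-s}\,ds,\qquad c_0>|\re(i\nu)|,
\]
once shifted leftward across the two simple poles at $s=\pm i\nu$, produces residues $\Gamma(\pm i\nu)(x/2)^{\mp i\nu}$ that exactly match the subtracted terms inside the bracket of $\mathcal{P}_{f_e}(y)$. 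Inserting the remaining line integral and swapping sum and integral on a line $\re(s)=c'\in(-2,0)$, where $\sum_n\widetilde{\lambda}_f(n)/n^{1-s}=1/L(1-s,f)$ converges absolutely, yields
\[
\mathcal{P}_{f_e}(y)=\tfrac{1}{2}\cdot\tfrac{1}{2\pi i}\int_{(c')}\frac{L_\infty(s,f)}{L(1-s,f)}\,y^{-s}\,ds.
\]
Using the Bessel derivative identity $K_\nu'(z)=-\tfrac12(K_{\nu-1}(z)+K_{\nu+1}(z))$ to connect $\mathcal{Q}_{f_e}(y)$ with $\mathcal{P}_{f_e}(y)$ (with the sign convention built into its series definition), differentiating under the integral and substituting $u=s+1$ produces
\[
\mathcal{Q}_{f_e}(y)=\tfrac{1}{2}\cdot\tfrac{1}{2\pi i}\int_{(c)}\frac{(u-1)L_\infty(u-1,f)}{L(2-u,f)}\,y^{-u}\,du,\qquad c\in(-1,1).
\]

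For part (a), the small-argument expansions of $K_{1\pm i\nu}$ give $\mathcal{Q}_{f_e}(y)=\mathcal{O}(y)$ as $y\to 0^+$, so together with the hypothesis the Mellin integral $\int_0^\infty \mathcal{Q}_{f_e}(y)y^{u-1}\,dy$ is absolutely convergent and holomorphic in the strip $-1<\re(u)<3/2-\delta$. Mellin inversion on the sub-strip $(-1,1)$ identifies it with $\tfrac12(u-1)L_\infty(u-1,f)/L(2-u,f)$, and analytic continuation extends both the identity and the holomorphy throughout the enlarged strip. Hence $L(2-u,f)$ cannot vanish for $u\in(-1,3/2-\delta)$, equivalently $L(s,f)$ has no non-trivial zeros with $1/2+\delta<\re(s)<3$. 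Letting $\delta\to 0^+$ and invoking the functional equation to reflect zeros with $\re(s)<1/2$ yields the grand Riemann hypothesis.

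For part (b), assume GRH and shift the contour in the representation of $\mathcal{Q}_{f_e}(y)$ from $\re(u)=c\in(-1,1)$ to $\re(u)=3/2-\delta$. Under GRH the zeros of $L(2-u,f)$ lie on $\re(u)=3/2$, so no $L$-zero residues are crossed; the only poles encountered are the two simple poles of $(u-1)L_\infty(u-1,f)$ at $u=1\pm i\nu$ arising from the gamma factors. Using $\Gamma(1\pm i\nu)=\pm i\nu\,\Gamma(\pm i\nu)$, the residue calculation contributes
\[
-\frac{\Gamma(1+i\nu)\pi^{-i\nu}}{L(1-i\nu,f)}\,y^{-1-i\nu}-\frac{\Gamma(1-i\nu)\pi^{i\nu}}{L(1+i\nu,f)}\,y^{-1+i\nu},
\]
while the shifted line integral is $\mathcal{O}(y^{-3/2+\delta})$ by standard bounds for $1/L$ on vertical lines inside the critical strip (available under GRH) together with Stirling's decay of the gamma factors. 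To rewrite these residues as the truncated sums appearing in \eqref{Bound on Q}, I would invoke the GRH-effective approximation $\sum_{n\le X}\widetilde{\lambda}_f(n)/n^{1\mp i\nu}=1/L(1\mp i\nu,f)+\mathcal{O}(X^{-1/2+\delta})$ obtained by a Perron contour argument, take $X=[y^{1-\epsilon}]-1$, and absorb the tail into the error. The analogous shift applied directly to the representation of $\mathcal{P}_{f_e}(y)$, this time crossing the simple poles of $L_\infty(s,f)$ at $s=\pm i\nu$, yields the asymptotic expansion of $\mathcal{P}_{f_e}(y)$.

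The principal obstacle is justifying the contour shifts in part (b): one must control $(u-1)L_\infty(u-1,f)/L(2-u,f)$ along $\re(u)=3/2-\delta$ and on the horizontal segments joining it to the original line. Under GRH one has $1/L(\sigma+it,f)\ll_\epsilon|t|^\epsilon$ for $\sigma>1/2$ uniformly in horizontal strips, while Stirling gives exponential decay of the gamma factors in the imaginary direction; together these make the integral absolutely convergent and deliver the required bound. A secondary technical point is the uniform validity of the Perron-type approximation for $\sum_{n\le X}\widetilde{\lambda}_f(n)/n^{1\mp i\nu}$, which, since $\re(1\mp i\nu)=1$ sits on the abscissa of absolute convergence, requires a careful contour argument rather than a direct citation.
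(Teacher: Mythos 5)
Your part (a) has a genuine gap, and it is precisely the technical point the paper is forced to confront. You claim that the identity
$\int_0^\infty \mathcal{Q}_{f_e}(y)\,y^{u-1}\,dy=\tfrac12(u-1)L_\infty(u-1,f)/L(2-u,f)$, established on a substrip, together with the holomorphy of the left-hand side in $-1<\re(u)<3/2-\delta$, extends "both the identity and the holomorphy throughout the enlarged strip." But the right-hand side is a fixed meromorphic function with simple poles at $u=1\pm i\nu$, coming from $\Gamma\left(\frac{u-1\mp i\nu}{2}\right)$ at the origin of its argument; these poles sit on $\re(u)=1$, squarely inside your enlarged strip, and they are not cancelled by $1/L(2-u,f)$ (that would require $L(1\mp i\nu,f)$ to have a pole, which is false since $L(s,f)$ is entire). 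So the continuation you assert is impossible as stated, and the step from which you read off the non-vanishing of $L(2-u,f)$ collapses. Note also that you cannot simply stop the strip short of $\re(u)=1$: the new information lives in $1<\re(u)<3/2$, so the line $\re(u)=1$ must be crossed. The paper's fix is to multiply the Mellin identity \eqref{Mellin transform for even function} by $\frac{s^2+\nu^2}{4}$ before continuing (see \eqref{deciding identity}); this converts $\Gamma\left(\frac{-s\pm i\nu}{2}\right)$ into $\Gamma\left(1+\frac{-s\pm i\nu}{2}\right)$ and pushes the offending poles out to $\re(s)=2$, after which the non-vanishing argument is clean. Your proof needs the analogous multiplier $\frac{(u-1)^2+\nu^2}{4}$ inserted before the continuation step.

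Part (b), by contrast, is a legitimately different and essentially sound route. The paper works directly on the series: it splits at $\ell=\lfloor y^{1-\delta}\rfloor$, estimates the tail by Abel summation plus the mean value theorem applied to $\Omega_\nu(y,x)$, and extracts the main term of the head from the large-argument asymptotics of the $K$-Bessel function, so the truncated sum in \eqref{Bound on Q} appears naturally. You instead shift the Mellin--Barnes contour to $\re(u)=3/2-\delta$, pick up the residues at $u=1\pm i\nu$ (your residue values and signs are consistent with the convention of \eqref{Q function}, under which $\mathcal{Q}_{f_e}=-\mathcal{P}_{f_e}'$), and then convert the closed forms $1/L(1\mp i\nu,f)$ into the truncated sums. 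The two technical inputs you flag are both available: under GRH one has $1/L(\sigma+it,f)\ll_\epsilon (|t|+1)^{\epsilon}$ uniformly for $\sigma\geq 1/2+\delta$, which with Stirling justifies the shift and the $\mathcal{O}(y^{-3/2+\delta})$ bound on the new line; and the approximation $\sum_{n\leq X}\widetilde{\lambda}_f(n)n^{-1\pm i\nu}=1/L(1\mp i\nu,f)+\mathcal{O}(X^{-1/2+\delta})$ does not need a delicate Perron argument --- it follows by partial summation from $\sum_{n\leq x}\widetilde{\lambda}_f(n)=\mathcal{O}(x^{1/2+\delta})$, which is exactly the estimate the paper quotes from Iwaniec--Kowalski. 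Your route buys a cleaner identification of the main term as a residue; the paper's buys an unconditional-looking elementary computation that never needs bounds for $1/L$ inside the critical strip.
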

The article is organized as follows. The next section is devoted to obtaining the transformation formulas of $L(s, f)$. We prove the criterion for the grand Riemann hypothesis for $L(s, f)$ for both odd and even case in \S \ref{Odd criterion} and \S \ref{Even criterion} respectively.

\section{Transformation formula involving the non-trivial zeros of $L(s, f)$}
In this section, we obtain an analogous result of \eqref{Ramanujan-Hardy-Littlewood identity} in the setting of primitive Maass cusp form. In the following lemma, we provide a lower bound for $L(s, f)$ which will be used to prove Theorem \ref{Hardy type identity}.
\begin{lemma}\label{Lower bound of L(s, f)}
For a sequence of positive numbers $T$ as $T\to \infty$ through values such that $|T-\gamma| > \exp (-A_1\gamma/\log \gamma)$ for every ordinate $\gamma$ of a zero of $L(s, f)$, where $A_1$ is sufficiently small positive constant, the bound
\begin{equation*}
|L(\sigma + iT, f)| \geq e^{-A_2 T}
\end{equation*}
holds for $\sigma \in [-1/2, 3/2]$, where $0<A_2<\frac{\pi}{4}$.
\end{lemma}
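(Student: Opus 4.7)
The plan is to adapt the classical lower-bound argument for $\zeta(s)$ (Titchmarsh \cite[Ch.~IX]{Titchmarsh}) to the present automorphic setting. The starting point is that $\Lambda(s,f)$ is entire of order one and therefore admits a Hadamard product; logarithmic differentiation, combined with \eqref{Gamma factors in FE}, yields the partial fraction expansion
\begin{equation*}
\frac{L'}{L}(s,f) = B + \log\pi - \frac{1}{2}\psi\!\left(\frac{s+\epsilon+i\nu}{2}\right) - \frac{1}{2}\psi\!\left(\frac{s+\epsilon-i\nu}{2}\right) + \sum_\rho\left(\frac{1}{s-\rho}+\frac{1}{\rho}\right),
\end{equation*}
with $\rho$ running over the non-trivial zeros and $B$ a constant depending on $f$.

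On the line $\re(s)=2$, the Dirichlet series for $L'/L$ converges absolutely (using \eqref{Sarnak bound}), so both $|L'/L(2+iT,f)|$ and $|\log L(2+iT,f)|$ are $O(1)$. Substituting $s=2+iT$ into the expansion and taking real parts, Stirling's formula for $\psi$ gives
\begin{equation*}
\sum_\rho \re\!\left(\frac{1}{2+iT-\rho}\right) + \re\!\left(\frac{1}{\rho}\right) = O(\log T).
\end{equation*}
Since every summand is positive, one recovers the standard density estimate that the number of zeros $\rho=\beta+i\gamma$ with $|T-\gamma|\leq 1$ is $O(\log T)$.

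I would now subtract the partial fraction identity at $s=\sigma+iT$ from the one at $s=2+iT$, obtaining
\begin{equation*}
\frac{L'}{L}(\sigma+iT,f) = \frac{L'}{L}(2+iT,f) + O(\log T) + \sum_\rho\left(\frac{1}{\sigma+iT-\rho} - \frac{1}{2+iT-\rho}\right),
\end{equation*}
the $O(\log T)$ absorbing the digamma difference via Stirling. The zero sum splits according to whether $|T-\gamma|<1$: for distant zeros the summand is $O(1/|T-\gamma|^2)$, totalling $O(\log T)$ by the density estimate; for nearby zeros, the hypothesis $|T-\gamma|>\exp(-A_1\gamma/\log\gamma)$ bounds each summand by $\exp(A_1 T/\log T)$, and there are only $O(\log T)$ such. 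Hence uniformly for $\sigma\in[-1/2,2]$,
\begin{equation*}
\left|\frac{L'}{L}(\sigma+iT,f)\right| \ll (\log T)\exp\!\left(\frac{A_1 T}{\log T}\right).
\end{equation*}

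Integrating along the horizontal segment from $2+iT$ to $\sigma+iT$ and using $|\log L(2+iT,f)|=O(1)$ then yields $\log|L(\sigma+iT,f)| \geq -C(\log T)\exp(A_1 T/\log T)$. Since the right-hand side is $o(T)$ for fixed $A_1$, the claimed bound $|L(\sigma+iT,f)|\geq e^{-A_2 T}$ follows for every $A_2>0$, in particular for any $A_2<\pi/4$, once $T$ is sufficiently large. The main technical point is the simultaneous control of nearby and distant zeros; the role of the hypothesis on $T$ is precisely to keep each nearby summand in the partial fraction expansion subexponential in $T$, and the freedom to choose $A_1$ small ensures the resulting bound sits comfortably beneath the $e^{A_2 T}$ ceiling needed in the subsequent contour-shift arguments, where Stirling on the gamma factors in \eqref{Gamma factors in FE} produces the competing $e^{-\pi|T|/4}$ decay.
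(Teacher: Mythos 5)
Your argument is correct and is essentially the same as the paper's, which simply cites the Hadamard/partial-fraction expansion of $L'/L$ (Iwaniec--Kowalski, Formula 5.24), the zero-density estimate near height $T$ (Sankaranarayanan--Sengupta), and the classical argument of Titchmarsh \S 9.8; you have filled in exactly that argument. The only remark is that where you rederive the bound $O(\log T)$ for the number of zeros with $|\gamma-T|\le 1$ from the partial-fraction formula at $s=2+iT$, the paper imports it as a quoted lemma, but the content is identical.
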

\begin{proof}
The key ingredients to prove the above lemma are followed from \cite[ p.~102, Fomula 5.24]{Iwaniec} and \cite[Lemma 3.6]{Sankaranarayanan}. One can therefore apply the similar argument as in \cite[p.~219, Section 9.8]{Titchmarsh} or \cite[p. 6]{Dixit3} to complete the proof. 
\end{proof}
We next provide the proof of Theorem \ref{Hardy type identity} only for the case when $f$ odd since the proof for $f$ even goes along the similar direction.

\subsection{Proof of Theorem \ref{Hardy type identity}}
The inverse Mellin transform of the modified Bessel function $K_{\mu}(x)$ \cite[p.~253, Formula 10.32.13]{temme} is given by
\begin{align}\label{imt of kinu}
K_{\mu}(x)=\frac{1}{2\pi i}\int_{(c)}2^{s-2}\Gamma\left(\frac{s-\mu}{2}\right)\Gamma\left(\frac{s+\mu}{2}\right)x^{-s}ds,
\end{align}
which is valid for any $c >\pm\mathrm{Re}(\mu)$. Here and throughout the article, $\int_{(c)}$ denotes the integral $\int_{c-i\infty}^{c+i\infty}$. Replacing $s$ by $s+1$ in \eqref{imt of kinu} and letting $x=2\pi \alpha/n$ and $\mu=i\nu$, 
we have
\begin{align}\label{k mellin}
\frac{4\pi\alpha}{n}K_{i\nu}\left(\frac{2\pi\alpha}{n}\right)=\frac{1}{2\pi i}\int_{(c)}L_\infty(s,f)\left(\frac{\alpha}{n}\right)^{-s}ds,
\end{align}
for $-\frac{1}{2}<c<0$, where $L_\infty(s,f)$ is defined in \eqref{Gamma factors in FE} for $f$ odd. 
We next insert \eqref{k mellin} into \eqref{Odd function} to obtain
\begin{equation}\label{Integral representation of odd function}
4\pi\alpha \mathcal{P}_{f_o} (\alpha) = \sum_{n=1}^\infty\frac{\widetilde{\lambda}_f(n)}{n} \frac{1}{2\pi i}\int_{(c)}L_\infty(s,f)\left(\frac{\alpha}{n}\right)^{-s}ds = \frac{1}{2\pi i}\int_{(c)}\frac{L_\infty(s,f)}{L(1-s,f)}\alpha^{-s}ds,
\end{equation}
where in the last step we have interchanged the order of summation and integration. The functional equation of $L(s, f)$, as in \eqref{Functional equation} thus yields
\begin{equation}\label{Apply functional equation}
4\pi\alpha \mathcal{P}_{f_o} (\alpha) = \frac{\epsilon_f \sqrt{N}}{2\pi i}\int_{(c)}\frac{L_\infty(1-s,f)}{L(s,f)}(N\alpha)^{-s}ds.
\end{equation}
We next shift the line of integration from $\re(s) = c$ to $\re(s) = \lambda$ where $1<\lambda<\frac{3}{2}$. Consider the positively oriented contour formed by the line segments $[\lambda-iT,\lambda+iT],\ [\lambda+iT,c+iT],\ [c+iT,c-iT]$ and $[c-it,\lambda-iT]$, where T is any positive real number. Clearly, the poles of the Gamma factors in $L_\infty(1-s, f)$ are on the right side of the line $\re(s) = \lambda$. Thus, the only poles inside the contour are arising from the non-trivial zeros $\rho$ of $L(s, f)$. Therefore, by invoking the Cauchy residue theorem, we arrive at
\begin{equation}\label{Cauchy residue theorem}
\int_{c-iT}^{c+iT}\frac{L_\infty(1-s,f)}{L(s,f)}(N\alpha)^{-s}ds = \left[\int_{c-iT}^{\lambda - iT} + \int_{\lambda - iT}^{\lambda + iT} + \int_{\lambda + iT}^{c + iT}\right]\frac{L_\infty(1-s,f)}{L(s,f)}(N\alpha)^{-s}ds - 2\pi i \sum_{|\im(\rho)|<T}R_{\rho},
\end{equation}
where $R_\rho$ denotes a residual term at a non-trivial zero $\rho$ of $L(s, f)$. Assuming that the non-trivial zeros of $L(s, f)$ are simple, the term $R_\rho$ evaluates as
\begin{align}\label{Evaluation of R}
R_\rho=\lim_{s \to \rho} (s-\rho) \frac{L_\infty(1-s,f)}{L(s,f)}(N\alpha)^{-s} =  \frac{L_\infty(1-\rho, f)}{L'(\rho, f)}(N\alpha)^{-\rho} = \frac{L_\infty(1-\rho, f)}{L'(\rho, f)}\beta^{\rho},
\end{align}
where in the last step we applied the relation $\alpha\beta = 1/N$. In general, if we assume the multiplicity of a non-trivial zero $\rho$ of $L(s, f)$ as $n_\rho$, then the residual term $R_\rho$ can be calculated as
\begin{align*}
R_{\rho}&=\frac{1}{(n_\rho-1)!}\lim_{s\to \rho}\frac{d^{n_\rho-1}}{ds^{n_\rho-1}}\left\{\frac{(s- \rho)^{n_\rho}L_\infty(1-s,f)(N\alpha)^{-s}}{L(s,f)}\right\}\nonumber\\
&=\frac{1}{(n_\rho-1)!}\lim_{s\to\rho}\frac{d^{n_\rho-1}}{ds^{n_\rho-1}}\left\{\frac{(s-\rho)^{n_\rho}L_\infty(1-s,f)\beta^{s}}{L(s,f)}\right\}.
\end{align*}

We can bound $\Gamma(s)$ for $s=\sigma+it$, in any vertical strip using Stirling's formula, which is given by (cf. \cite[p. 224]{Copson})
\begin{equation}\label{Stirling bound}
|\Gamma(s)| = (2\pi)^{1/2} |t|^{\sigma-\frac{1}{2}}e^{-\frac{1}{2} \pi |t|}\(1+ \mathcal{O}\(\frac{1}{|t|}\right)\right).
\end{equation}
Now, Lemma \ref{Lower bound of L(s, f)} and \eqref{Stirling bound} together implies that as $T\to \infty$,
\begin{equation*}
\frac{L_\infty(1-s,f)}{L(s,f)}(N\alpha)^{-s} = \mathcal{O}\left(e^{(A_2 - \frac{\pi}{2})|T|}\right),
\end{equation*}
where $A_2 < \pi/4$. Therefore the horizontal integrals in \eqref{Cauchy residue theorem} vanishes as $T\to \infty$. We next concentrate on the vertical integral in \eqref{Cauchy residue theorem} and denote the vertical integral as $\mathcal{V}$. Substituting $s$ by $1-s$ and applying the relation $\alpha \beta = 1/N$ respectively, the vertical integral $\mathcal{V}$ reduces to
\begin{align*}
\mathcal{V} := \int_{\lambda - iT}^{\lambda + iT} \frac{L_\infty(1-s,f)}{L(s,f)}(N\alpha)^{-s}ds = \beta \int_{1- \lambda - iT}^{1- \lambda + iT} \frac{L_\infty(s,f)}{L(1-s,f)}\beta^{-s}ds.
\end{align*}
Clearly $-\frac{1}{2}<1-\lambda<0$. Therefore as $T \to \infty$, we can apply \eqref{Integral representation of odd function} to evaluate the vertical integral $\mathcal{V}$  as
\begin{equation}\label{Vertical integral}
\mathcal{V} = 2\pi i \cdot 4 \pi \beta^2 \mathcal{P}_{f_o} (\beta).
\end{equation}
Combining \eqref{Apply functional equation}, \eqref{Cauchy residue theorem}, \eqref{Evaluation of R} and \eqref{Vertical integral} together, we have
\begin{equation*}
\frac{4\pi\alpha}{\epsilon_f \sqrt{N}} \mathcal{P}_{f_o} (\alpha) = 4 \pi \beta^2 \mathcal{P}_{f_o} (\beta) -  \sum_{\rho}\frac{L_\infty(1-\rho, f)}{L'(\rho, f)}\beta^{\rho}.
\end{equation*}
Finally, we multiply both sides of the above equation by $\frac{\epsilon_f\sqrt{N\alpha}}{4\pi}$ and invoke the relation $\alpha\beta=1/N$ to conclude the first part of our theorem.

For the second part, one can first apply Cauchy residue theorem in \eqref{imt of kinu} by shifting the line of integration from $c$ to $\lambda$ such that $-1 < \lambda < - \frac{1}{2}$ and substitute $x= \frac{2\pi \alpha}{n}$, $\mu = i \nu$ to obtain
\begin{align}\label{Inverse Mellin in even case}
2K_{i\nu}\left(\frac{2\pi\alpha}{n}\right)-\left(\frac{\pi \alpha}{n}\right)^{-i\nu}\Gamma(i\nu)-\left(\frac{\pi \alpha}{n}\right)^{i\nu}\Gamma(-i\nu)=\frac{1}{4\pi i}\int_{(\lambda)}\Gamma\left(\frac{s+i\nu}{2}\right)\Gamma\left(\frac{s-i\nu}{2}\right)\left(\frac{\pi \alpha}{n}\right)^{-s} ds.
\end{align}
Next, one can argue along the similar direction as was shown for part (a) to complete the proof. 

\section{Equivalent criterion for grand Riemann hypothesis when $f$ is odd}\label{Odd criterion}
We first provide a heuristic stemming from Theorem \ref{Hardy type identity}, which motivates us to get an equivalent criterion of the grand Riemann hypothesis for $L(s, f)$. The following asymptotic formula for the modified Bessel function $K_z(x)$ as $x\to0$ \cite[p.~375, Equations (9.6.8), (9.6.9)]{AS}:
\begin{align}\label{Bessel K in small argument}
K_z(x)\sim
\begin{cases}
\frac{1}{2}\Gamma(z)\left(\frac{x}{2}\right)^{-z}, & \mathrm{if}\ \mathrm{Re}(z)>0,\\
-\log(x),&\mathrm{if}\ z=0,
\end{cases}
\end{align}
and \eqref{Odd identity} together imply that 
\begin{align}\label{limit of p is zero}
\alpha^{3/2}P_{f_0}(\alpha)\to 0,
\end{align}
as $\alpha\to0$. We now assume the grand Riemann hypothesis for $L(s, f)$ and the convergence of the series $\sum_{\rho}\frac{L_\infty(1-\rho,f)}{L'(\rho,f)}\beta^{\rho-\frac{1}{2}}$. Then \eqref{Odd identity} and \eqref{limit of p is zero} together conclude
\begin{align*}
P_{f_0}(\beta)=\mathcal{O}\left(\beta^{-3/2}\right),
\end{align*}
as $\beta\to\infty$. 

The heuristic assumes the convergence of the series $\sum_{\rho}\frac{L_\infty(1-\rho,f)}{L'(\rho,f)}\beta^{\rho-\frac{1}{2}}$. It has been shown in Theorem \ref{Equivalent criterion} that the assumption of the grand Riemann hypothesis is enough to obtain the bound $P_{f_0}(\beta)=\mathcal{O}\left(\beta^{-3/2+\delta}\right)$, for any $\delta>0$. 

In the following lemma, we provide a bound for the function $P_{f_o}(y)$ as $y \to 0$.
\begin{lemma}\label{bound on P}
Let $-\frac{1}{2}<c<0$. We have
\begin{align*}
P_{f_o}(y)=\mathcal{O}\left(y^{-1-c}\right),
\end{align*}
as $y\to0$.
\end{lemma}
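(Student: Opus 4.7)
The plan is to exploit the integral representation of $\mathcal{P}_{f_o}$ that was derived in the proof of Theorem \ref{Hardy type identity}. From equation \eqref{Integral representation of odd function}, for any $c$ with $-\tfrac{1}{2}<c<0$, we have
\begin{equation*}
\mathcal{P}_{f_o}(y) \;=\; \frac{1}{8\pi^2 i\, y}\int_{(c)}\frac{L_\infty(s,f)}{L(1-s,f)}\, y^{-s}\,ds,
\end{equation*}
so it suffices to bound the integral uniformly in $y$ (as $y\to 0^+$, the factor $y^{-c}$ will dictate the final size).

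On the vertical line $\re(s)=c$, I would estimate the integrand factor by factor. First, since $\re(1-s)=1-c>1$, the Dirichlet series \eqref{L-function} converges absolutely and is bounded away from zero there (by comparison with $\zeta(1-c)^2$ through the Euler product \eqref{Euler product} together with the Kim--Sarnak bound \eqref{Sarnak bound}); consequently $|L(1-s,f)|^{-1}$ is bounded by a constant depending only on $c$ and $N$, uniformly in $\im(s)$. Second, for the archimedean factor $L_\infty(s,f)$ defined in \eqref{Gamma factors in FE}, Stirling's estimate \eqref{Stirling bound} applied to each of the two gamma factors gives, for $s=c+it$ with $|t|$ large,
\begin{equation*}
\bigl|L_\infty(s,f)\bigr| \;\ll\; |t|^{c+\epsilon-1}\, e^{-\frac{\pi}{2}|t|},
\end{equation*}
where the real parts of the arguments of the gammas remain positive for $-\tfrac12<c<0$ (using $\epsilon\in\{0,1\}$ and $|\im(i\nu)|<\tfrac12$ coming from the admissible range of $\nu$), so no poles are crossed on the contour. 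Finally $|y^{-s}|=y^{-c}$ on this line.

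Combining these three ingredients yields
\begin{equation*}
\left|\int_{(c)}\frac{L_\infty(s,f)}{L(1-s,f)}\,y^{-s}\,ds\right| \;\ll\; y^{-c}\int_{-\infty}^{\infty}(1+|t|)^{c+\epsilon-1} e^{-\frac{\pi}{2}|t|}\,dt \;\ll\; y^{-c},
\end{equation*}
the last integral being a convergent constant independent of $y$. Dividing by $8\pi^2 y$ gives $\mathcal{P}_{f_o}(y)=\mathcal{O}(y^{-1-c})$ as claimed.

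The only point requiring any care is verifying that the contour $\re(s)=c$ avoids the poles of the gamma factors in $L_\infty(s,f)$ and that $L(1-s,f)$ is indeed bounded below on that line; both reduce to the constraints $c\in(-\tfrac12,0)$ together with $-\tfrac12<\im(i\nu)<\tfrac12$ from the spectral parameter of $f$, and to absolute convergence of $L$ at $\re=1-c>1$. Beyond that, the argument is a routine contour estimate using Stirling and the existing integral representation, so I do not anticipate a substantive obstacle.
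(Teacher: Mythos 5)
Your proof is correct and takes essentially the same route as the paper: the paper bounds each term $\frac{1}{n}K_{i\nu}\left(\frac{2\pi y}{n}\right)$ via the Mellin--Barnes representation \eqref{k mellin} and Stirling's formula to get $\ll y^{-c-1}n^{c}$ and then sums $\sum_n \widetilde{\lambda}_f(n)\,n^{c-1}$, whereas you estimate the already-assembled integral \eqref{Integral representation of odd function}, absorbing that coefficient sum into the factor $1/L(1-s,f)$ on the line $\re(1-s)=1-c>1$. The two arguments rest on the identical Stirling estimate and the identical absolute-convergence fact, so they are interchangeable.
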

\begin{proof}
We first apply both \eqref{k mellin} and \eqref{Stirling bound} to obtain that for $-\frac{1}{2}<c<0$, 
\begin{align}\label{bound on K}
\frac{1}{n}K_{i\nu}\left(\frac{2\pi y}{n}\right)=\frac{1}{4\pi y}\frac{1}{2\pi i}\int_{(c)}L_\infty(s,f)\left(\frac{y}{n}\right)^{-s}ds 
\ll \frac{y^{-c-1}}{n^{-c}}.
\end{align}
Therefore, \eqref{Odd function} and \eqref{bound on K} imply that
\begin{align*}
P_{f_o}(y)\ll y^{-c-1}\sum_{n=1}^\infty\frac{\widetilde{\lambda}_f(n)}{n^{1-c}}
\ll y^{-c-1},
\end{align*}
where in the last step we used the convergence of the series $\sum_{n=1}^\infty\frac{\widetilde{\lambda}_f(n)}{n^{1-c}}$ for $-\frac{1}{2}<c<0$.
\end{proof}
We next obtain the Mellin transform of the function $P_{f_o}(y)$, which plays an important role to prove Theorem \ref{Equivalent criterion}, when $f$ is odd.
\begin{lemma}\label{MT of p}
For any complex number $s$ with $0<\mathrm{Re}(s)<\frac{1}{2}$, we have
\begin{align*}
\int_0^\infty y^{-s}P_{f_o}(y)dy=\frac{\pi^{s-1}}{4}\frac{\Gamma\left(\frac{1-s+i\nu}{2}\right)\Gamma\left(\frac{1-s-i\nu}{2}\right)}{L(f,s+1)}.
\end{align*}
\end{lemma}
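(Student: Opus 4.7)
The plan is to compute $\int_0^\infty y^{-s}P_{f_o}(y)\,dy$ by term-by-term integration, using the standard Mellin transform of the $K$-Bessel function together with the Dirichlet series identity $\sum_{n\geq 1}\widetilde\lambda_f(n)/n^{s+1}=1/L(s+1,f)$. A direct change of variables $u=2\pi y/n$ in the classical formula $\int_0^\infty x^{s'-1}K_{i\nu}(x)\,dx=2^{s'-2}\Gamma((s'+i\nu)/2)\Gamma((s'-i\nu)/2)$ (valid for $\re(s')>|\re(i\nu)|$) yields, for $\re(s)<1-|\re(i\nu)|$,
\begin{equation*}
\int_0^\infty y^{-s}K_{i\nu}\!\left(\frac{2\pi y}{n}\right)dy=\frac{n^{1-s}\pi^{s-1}}{4}\,\Gamma\!\left(\frac{1-s+i\nu}{2}\right)\Gamma\!\left(\frac{1-s-i\nu}{2}\right).
\end{equation*}
Multiplying by $\widetilde\lambda_f(n)/n^2$, summing over $n\geq 1$, and interchanging sum and integral produces exactly the claimed formula once the inner Dirichlet series is identified with $1/L(s+1,f)$ via \eqref{Euler product of reciprocal}.

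The one substantive issue is justifying Fubini. The inner integral is of order $n^{1-\sigma}$ (with $\sigma=\re(s)$), so absolute convergence of the double integral reduces to convergence of $\sum_n|\widetilde\lambda_f(n)|\,n^{-1-\sigma}$; by \eqref{Lambda tilda} and the Kim--Sarnak bound \eqref{Sarnak bound}, this holds for $\sigma>7/64$. Consequently the formula is established on the strip $7/64<\re(s)<1/2$ directly. To extend to the full strip $0<\re(s)<1/2$, I would appeal to analytic continuation: the left-hand side is holomorphic on $0<\re(s)<1/2$ since by Lemma \ref{bound on P} (taking $c$ close to $-1/2$) we have $P_{f_o}(y)=O(y^{-1/2-\epsilon})$ as $y\to 0^+$, while $K_{i\nu}(2\pi y/n)$ decays exponentially as $y\to\infty$; the right-hand side is holomorphic on this strip because $L(s+1,f)$ is non-vanishing on $\re(s+1)>1$ and the gamma factors have no poles there.

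An equivalent and slightly slicker route is to read off the Mellin transform directly from the integral representation \eqref{Integral representation of odd function}, which already asserts that $4\pi yP_{f_o}(y)$ is the inverse Mellin transform of $L_\infty(s,f)/L(1-s,f)$ along the line $\re(s)=c\in(-1/2,0)$. Mellin inversion (whose hypotheses hold since the gamma factors in $L_\infty$ decay exponentially on vertical lines by Stirling's formula \eqref{Stirling bound} while $L(1-s,f)^{-1}$ is bounded on $\re(s)=c$) then gives $\int_0^\infty y^{\tau}P_{f_o}(y)\,dy = L_\infty(\tau,f)/(4\pi L(1-\tau,f))$ for $-1/2<\re(\tau)<0$, and the substitution $\tau=-s$ together with the explicit form of $L_\infty(-s,f)$ for $\epsilon=1$ reproduces the statement. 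The main potential obstacle in either route is the same: verifying the convergence bounds with enough uniformity to justify either the Fubini--analytic-continuation step or the Mellin-inversion hypothesis, but in both cases the needed bounds are standard consequences of Stirling's formula and the Kim--Sarnak estimate.
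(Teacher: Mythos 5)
Your proposal is correct, and it reaches the stated formula by a genuinely different organization of the computation than the paper's. You integrate the defining series of $P_{f_o}$ term by term, using $\int_0^\infty y^{-s}K_{i\nu}\left(\frac{2\pi y}{n}\right)dy=\frac{n^{1-s}\pi^{s-1}}{4}\Gamma\left(\frac{1-s+i\nu}{2}\right)\Gamma\left(\frac{1-s-i\nu}{2}\right)$ and then summing $\sum_{n}\widetilde{\lambda}_f(n)n^{-s-1}=1/L(s+1,f)$; since absolute convergence under Kim--Sarnak only holds for $\re(s)>7/64$, you must finish with analytic continuation to cover all of $0<\re(s)<1/2$. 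The paper instead multiplies $\phi(s):=\int_0^\infty y^{-s}P_{f_o}(y)\,dy$ by $L(s+1,f)$ and exploits the Dirichlet-convolution identity $\sum_{d\mid n}\lambda_f(n/d)\widetilde{\lambda}_f(d)=\delta_{n,1}$ at the level of functions, collapsing $\sum_n \lambda_f(n)n^{-2}P_{f_o}(x/n)$ to the single Bessel function $K_{i\nu}(2\pi x)$, whose Mellin transform converges in the whole strip at once; the price is two interchanges of summation and integration instead of one interchange plus a continuation step. Your alternative route via Mellin inversion of \eqref{Integral representation of odd function} is also valid and is closest in spirit to the machinery the paper uses for Theorem \ref{Hardy type identity}. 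One small point to tighten in your main route: to get holomorphy of $\int_0^\infty y^{-s}P_{f_o}(y)\,dy$ on the full strip you cite exponential decay of $K_{i\nu}(2\pi y/n)$ as $y\to\infty$, but that controls individual terms only; the portion of the sum with $n\gg y$ does not decay exponentially. The correct uniform input is Lemma \ref{bound on P} with $c$ taken close to $0^-$ (its proof is valid for all $y>0$, not just $y\to 0$), which gives $P_{f_o}(y)\ll y^{-1+\epsilon}$ and hence convergence of the integral at infinity for every $\re(s)>0$. This is a presentational fix, not a gap in the argument.
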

\begin{proof}
Let
\begin{align*}
\phi(s):=\int_0^\infty  y^{-s}P_{f_o}(y)dy.
\end{align*}
We first make the change of variable $y$ to $x/n$ and then multiply both sides by $\lambda_f(n)/n$ to write the above integral as
\begin{align*}
\frac{\lambda_f(n)}{n^{s+1}}\phi(s)=\int_0^\infty  x^{-s} \frac{\lambda_f(n)}{n^2} P_{f_o}\(\frac{x}{n}\right) \, dx.
\end{align*}
Summing over $n$ from $1$ to $\infty$, we interchange the order of summation and integration by applying the Weierstrass M-test and the Lebesgue dominated convergence theorem to obtain
\begin{align}\label{lfpf}
L(s+1, f)\phi(s)=\int_0^\infty  x^{-s}\sum_{n=1}^\infty\frac{\lambda_f(n)}{n^2} P_{f_o}\(\frac{x}{n}\right) \, dx.
\end{align}
It follows from the integral representation of $P_{f_o}(y)$ as in \eqref{Integral representation of odd function} that for any $c \in (-\frac{1}{2}, 0)$, we have
\begin{align*}
\sum_{n=1}^\infty\frac{\lambda_f(n)}{n^2} P_{f_o}\(\frac{x}{n}\right) &=\sum_{n=1}^\infty\frac{\lambda_f(n)}{n^2} \frac{n}{4\pi x}\frac{1}{2\pi i}\int_{(c)}\frac{L_\infty(s,f)}{L(1-s,f)}\left(\frac{x}{n}\right)^{-s}ds\nonumber\\
&=\frac{1}{4\pi i x}\int_{(c)}L_\infty(s,f)x^{-s}ds,
\end{align*}
where in the last step we interchanged the order of summation and integration and used the series definition of $L(s, f)$. Therefore substituting $\alpha$ by $nx$ in the expression \eqref{k mellin}, the above series reduces to
\begin{align}\label{sum in k}
\sum_{n=1}^\infty\frac{\lambda_f(n)}{n^2}  P_{f_o}\(\frac{x}{n}\right) = K_{i\nu}(2\pi x).
\end{align}
Finally, we insert \eqref{sum in k} into \eqref{lfpf} and apply the Mellin transform of $K$-Bessel function for $0<\re(s)<\frac{1}{2}$ to conclude that
\begin{align*}
L(f,s+1)\phi(s)&=\int_0^\infty  x^{-s}K_{i\nu}(2\pi x)
=\frac{\pi^{s-1}}{4}\Gamma\left(\frac{1-s+i\nu}{2}\right)\Gamma\left(\frac{1-s-i\nu}{2}\right).
\end{align*}
This completes the proof of our lemma.
\end{proof}

We are now ready to prove an equivalent criterion of the grand Riemann hypothesis for $L(s, f)$ when $f$ is odd.

\subsection{Proof of Theorem \ref{Equivalent criterion}}

We first prove the sufficient condition of the grand Riemann hypothesis for $L(s,f)$ and for that we assume that the bound $P_{f_o}(y)=\mathcal{O}\left(y^{-\frac{3}{2}+\delta}\right)$ holds for any $\delta>0$, as $y\to \infty$. It is already known from Lemma \ref{MT of p} that the identity
\begin{align}\label{Odd identity in eqivalent criterion}
L(f,s+1) \int_0^\infty y^{-s}P_{f_o}(y)dy=\frac{\pi^{s-1}}{4}\Gamma\left(\frac{1-s+i\nu}{2}\right)\Gamma\left(\frac{1-s-i\nu}{2}\right).
\end{align}
holds in $0<\re(s)<\frac{1}{2}$. We claim that the identity is also valid inside the region $-\frac{1}{2}<\mathrm{Re}(s)\leq0$ under the assumption of the above bound of  $P_{f_o}(y)$.

To prove the claim, we first split the integral in the above identity into two parts, one from $0$ to $1$ and another from $1$ to $\infty$. Lemma \ref{bound on P} yields the convergence of the first integral inside the region $-\frac{1}{2}<\mathrm{Re}(s)\leq 0$. It follows from the bound $P_{f_o}(y)=\mathcal{O}\left(y^{-\frac{3}{2}+\delta}\right)$ that the second integral also converges inside the same region. Therefore, the result \cite[Theorem 3.2, p. 30]{temme} implies that the integral $ \int_0^\infty y^{-s}P_{f_o}(y) \, dy$ represents an analytic function in $-\frac{1}{2}<\mathrm{Re}(s)\leq0$. The gamma factors on the right hand side of \eqref{Odd identity in eqivalent criterion} are also analytic inside $-\frac{1}{2}<\mathrm{Re}(s)\leq0$ and $L(s+1, f)$ is entire. Therefore, the principle of analytic continuation yields our claim.  

Now the right-hand side of \eqref{Odd identity in eqivalent criterion} has no zeros in $-\frac{1}{2}<\mathrm{Re}(s)\leq0$. On the other hand, the integral on the left-hand side is analytic inside the same region. Therefore, $L(s+1, f)$ does not vanish in $-\frac{1}{2}<\mathrm{Re}(s)\leq0$ which concludes that the grand Riemann hypothesis is true.

We next prove the converse part and for that we first assume that the grand Riemann hypothesis is true. Under the assumption of the grand Riemann hypothesis for $L(s, f)$, it follows from \cite[Proposition 5.14, p. 113]{Iwaniec} that as $x \to \infty$,
\begin{align*}
\sum_{n\leq x}\widetilde{\lambda}_f(n)=\mathcal{O}\left(x^{\frac{1}{2}+\delta}\right),
\end{align*}
where $\delta$ is any positive real number. Let
\begin{align*}
M(\ell,n):=\sum_{m=\ell}^n\frac{\widetilde{\lambda}_f(m)}{m}.
\end{align*}
Applying the Euler's partial summation formula, the above function can be bounded  as
\begin{equation}\label{Bound of M(l, n)}
M(\ell,n) = \mathcal{O}\left(\ell^{-1/2+\delta}\right).
\end{equation}
Let $\ell=\lfloor y^{1-\delta}\rfloor$. We split the sum $P_{f_o}(y)$ into two parts as
\begin{align}\label{p in s1 s2}
P_{f_o}(y)=S_1(f, y)+S_2(f, y),
\end{align}
where
\begin{align*}
S_1(f,y):=\sum_{n=1}^{\ell-1}\frac{\widetilde{\lambda}_f(n)}{n^2}K_{i\nu}\left(\frac{2\pi y}{n}\right),
\end{align*}
and 
\begin{align*}
S_2(f,y):=\sum_{n=\ell}^\infty\frac{\widetilde{\lambda}_f(n)}{n^2}K_{i\nu}\left(\frac{2\pi y}{n}\right).
\end{align*}
We first estimate $S_2(f,y)$. We have
\begin{align*}
\sum_{n=\ell}^N\frac{\widetilde{\lambda}_f(n)}{n^2}K_{i\nu}\left(\frac{2\pi y}{n}\right)=\sum_{n=\ell}^{N-1}M(\ell,n)\left(\frac{K_{i\nu}\left(\frac{2\pi y}{n}\right)}{n}-\frac{K_{i\nu}\left(\frac{2\pi y}{n+1}\right)}{n+1}\right)+M(\ell,N)\frac{K_{i\nu}\left(\frac{2\pi y}{N}\right)}{N},
\end{align*}
where the last term vanishes as $N\to\infty$. Therefore, the above equation implies
\begin{align}\label{S2fy}
S_2(f,y)=\sum_{n=\ell}^{\infty}M(\ell,n)\left(\frac{K_{i\nu}\left(\frac{2\pi y}{n}\right)}{n}-\frac{K_{i\nu}\left(\frac{2\pi y}{n+1}\right)}{n+1}\right).
\end{align}
We next estimate the above sum by utilizing the mean value theorem and for that we define
\begin{align}\label{defn g}
g(x):&=\frac{K_{i\nu}\left(\frac{2\pi y}{x}\right)}{x}.
\end{align}
Invoking \eqref{k mellin} into \eqref{defn g}, we obtain
\begin{align*}
g(x)=\frac{1}{4\pi y}\frac{1}{2\pi i}\int_{(c)}\Gamma\left(\frac{s+1+i\nu}{2}\right)\Gamma\left(\frac{s+1-i\nu}{2}\right)\left(\frac{\pi y}{x}\right)^{-s}ds,
\end{align*}
where $-\frac{1}{2}< c <0$. We differentiate $g(x)$ with respect to $x$ and apply Stirling's formula \eqref{Stirling bound} on the gamma factors to derive that
\begin{align}\label{g bound}
g'(x)&=\frac{1}{4\pi y}\int_{(c)}\Gamma\left(\frac{s+1+i\nu}{2}\right)\Gamma\left(\frac{s+1-i\nu}{2}\right)s\left(\pi y\right)^{-s}x^{s-1}ds 
\ll \frac{x^{c-1}}{y^{c+1}}.
\end{align}
It follows from the mean value theorem that there exist $\lambda_n\in(n,n+1)$ such that $g(n)-g(n+1)=-g'(\lambda_n)$. Therefore the sum $S_2(f, y)$ in \eqref{S2fy} can be written as
\begin{align*}
S_2(f,y)&=\sum_{n=\ell}^{\infty}M(\ell,n)(g(n)-g(n+1)) 
= \sum_{n=\ell}^{\infty}M(\ell,n)(-g'(\lambda_n)).
\end{align*}
Inserting the bounds from \eqref{Bound of M(l, n)} and \eqref{g bound} into the above equation, we arrive at
\begin{align}\label{s2 final bound odd}
S_2(f,y)&\ll \frac{\ell^{-\frac{1}{2}+\delta}}{y^{c+1}}\sum_{n=\ell}^\infty n^{c-1}\ll \frac{\ell^{c-\frac{1}{2}+\delta}}{y^{c+1}}
\ll y^{-\frac{3}{2}+\delta},
\end{align}
where in the penultimate step we have used the fact that $\sum_{n>x} n^{-s} = \mathcal{O}(x^{1-s})$ (cf. \cite[p.~55, Theorem 3.2(c)]{apostal}) and in the last step, we put $\ell=\lfloor y^{1-\delta}\rfloor$.

Next, we will concentrate on $S_1(f,y)$. Utilizing the asymptotics of $K_{i\nu}(z)$ as $z \to \infty$ \cite[Formula 10.40.2, p. 255]{NIST}, we can bound $S_1(f,y)$ as
\begin{align*}
S_1(f,y)\ll \frac{e^{-\frac{2\pi y}{\ell}}}{\sqrt{y}}\sum_{n=1}^{\ell-1}\frac{\widetilde{\lambda}_f(n)}{n^{3/2}}.
\end{align*}
Therefore, the bound of $\widetilde{\lambda}_f(n)$, followed from \eqref{Sarnak bound} and the value $\ell=\lfloor y^{1-\delta}\rfloor$, yields
\begin{equation}\label{s1 final bound}
S_1(f,y) \ll \frac{e^{-\frac{2\pi y}{\ell}}}{\sqrt{y}}\ell^{-\frac{25}{64}+\delta}
\ll y^{-\frac{57}{64}+\delta}e^{-2\pi y^\delta},
\end{equation}
for any $\delta > 0$. Combining the estimates of $S_1(f,y)$ and $S_2(f,y)$ as in \eqref{s1 final bound} and \eqref{s2 final bound odd} respectively into \eqref{p in s1 s2}, we can conclude that as $y\to \infty$, $P_{f_o}(y)=\mathcal{O}\left(y^{-\frac{3}{2}+\delta}\right)$ for any $\delta>0$. This completes the proof of Theorem \ref{Equivalent criterion}(a).

\section{Equivalent criterion for grand Riemann hypothesis when $f$ is even}\label{Even criterion}

In this section, we provide a criterion of the grand Riemann hypothesis for $L(s, f)$ when $f$ is even, which is motivated from Theorem \ref{Hardy type identity}(b). Under the assumption of the grand Riemann hypothesis for $L(s, f)$ and the convergence of the series $\sum_{\rho}\frac{L_\infty(1-\rho,f)}{L'(\rho,f)}\beta^{\rho-\frac{1}{2}}$, it follows from Theorem  \ref{Hardy type identity} and \eqref{Bessel K in small argument} that as $\alpha \to 0$, or equivalently, as $\beta \to \infty$,
\begin{equation*}
\mathcal{P}_{f_e}(\beta) = \frac{\pi^{i\nu}\Gamma(-i\nu)}{L(-i\nu,f)}\beta^{i \nu} +\frac{\pi^{-i\nu}\Gamma(i\nu)}{L(i\nu,f)}\beta^{-i\nu} + \mathcal{O}(\beta^{-1/2})
\end{equation*}
The heuristic is true under the assumption of the convergence of the series $\sum_{\rho}\frac{L_\infty(1-\rho,f)}{L'(\rho,f)}\beta^{\rho-\frac{1}{2}}$ but without this assumption, it is shown in Theorem \ref{Equivalent criterion} that the estimate of $\mathcal{P}_{f_e}(\beta)$ provides the main term of order $\beta^{|i\nu|}$ and the error term of the order $\beta^{-1/2+\delta}$ for any $\delta>0$. As mentioned earlier in section \S \ref{Introduction}, we need to consider $\mathcal{Q}_{f_e}(\beta)$ to find the criterion of the grand Riemann hypothesis for $L(s, f)$.

In order to prove the sufficient condition of the grand Riemann hypothesis, the following two lemmas are crucial.
\begin{lemma}\label{Q bound}
Let $-\frac{3}{2}<c<-\frac{1}{2}$. Then for any $y>0$, 
\begin{align*}
\mathcal{Q}_{f_e}(y)=\mathcal{O}\left(y^{-c-1}\right).
\end{align*}
\end{lemma}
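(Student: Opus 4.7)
The plan is to follow the template of Lemma \ref{bound on P}: express $\mathcal{Q}_{f_e}(y)$ as a single contour integral along $\re(s)=c$ with $-3/2<c<-1/2$ and bound the integrand via Stirling's formula. The expected decay $y^{-c-1}$ (as opposed to the weaker $y^{-c}$ one would get by applying \eqref{imt of kinu} naively to each Bessel term) reflects an extra factor of $1/y$ that must first be extracted from the Bessel kernel. This extraction is the key algebraic computation.

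To extract the $1/y$ factor, I will start from \eqref{imt of kinu} with $\mu=1+i\nu$ and replace $s$ by $s+1$, which produces
\begin{align*}
xK_{1+i\nu}(x)=\frac{1}{2\pi i}\int_{(d)}2^{s-1}\Gamma\!\left(\frac{s-i\nu}{2}\right)\Gamma\!\left(\frac{s+2+i\nu}{2}\right)x^{-s}\,ds,\qquad d>0.
\end{align*}
Shifting the line past the single pole at $s=i\nu$ of $\Gamma((s-i\nu)/2)$, whose residue (after substituting $x=2\pi y/n$) is exactly $\Gamma(1+i\nu)(\pi y/n)^{-i\nu}$, then dividing by $x=2\pi y/n$ and multiplying by $2$, yields
\begin{align*}
2K_{1+i\nu}\!\left(\tfrac{2\pi y}{n}\right)-\Gamma(1+i\nu)\!\left(\tfrac{\pi y}{n}\right)^{-(1+i\nu)}=\frac{n}{\pi y}\cdot\frac{1}{2\pi i}\int_{(c)}2^{s-1}\Gamma\!\left(\tfrac{s-i\nu}{2}\right)\Gamma\!\left(\tfrac{s+2+i\nu}{2}\right)\!\left(\tfrac{2\pi y}{n}\right)^{-s}ds,
\end{align*}
valid for $-2<c<0$. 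The identity for $K_{1-i\nu}$ is obtained by swapping $i\nu\leftrightarrow -i\nu$. Adding the two identities, inserting them into \eqref{Q function}, and interchanging summation and integration (justified because $\sum_n\widetilde{\lambda}_f(n)/n^{1-s}=1/L(1-s,f)$ converges absolutely for $c<-7/64$ by \eqref{Sarnak bound}) gives
\begin{align*}
\mathcal{Q}_{f_e}(y)=\frac{1}{y}\cdot\frac{1}{2\pi i}\int_{(c)}\frac{G(s)(2\pi y)^{-s}}{L(1-s,f)}\,ds,
\end{align*}
where $G(s)=2^{s-1}\bigl[\Gamma(\tfrac{s-i\nu}{2})\Gamma(\tfrac{s+2+i\nu}{2})+\Gamma(\tfrac{s+i\nu}{2})\Gamma(\tfrac{s+2-i\nu}{2})\bigr]$.

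For $c\in(-3/2,-1/2)$ the vertical line avoids every pole of $G(s)$ (which sit at real parts $0,-2,-4,\ldots$ given the constraints on $\nu$), and Stirling's formula \eqref{Stirling bound} applied to each pair of Gamma factors gives $|G(c+it)|\ll|t|^{c}e^{-\pi|t|/2}$ as $|t|\to\infty$; near $t=0$ the function $|G(c+it)|$ is merely bounded. Moreover $\re(1-s)=1-c\in(3/2,5/2)$ lies in the region of absolute convergence of $L(s,f)$, so $|L(1-s,f)|$ is bounded below by a positive constant. Consequently $\int_{\R}|G(c+it)|\,dt<\infty$, and combined with $|(2\pi y)^{-s}|=(2\pi y)^{-c}$ this yields
\begin{align*}
|\mathcal{Q}_{f_e}(y)|\ll\frac{1}{y}\cdot y^{-c}\int_{\R}|G(c+it)|\,dt\ll y^{-c-1},
\end{align*}
which is the stated bound.

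The main step I would check most carefully is the residue bookkeeping: one has to confirm that the factor of $2$ coming from $ds=2\,d((s-i\nu)/2)$, the prefactor $2^{s-1}$ evaluated at $s=i\nu$, and the substitution $x=2\pi y/n$ combine to produce precisely $\Gamma(1+i\nu)(\pi y/n)^{-i\nu}$, so that after division by $x$ and doubling the subtraction term appearing in \eqref{Q function} is reproduced exactly. Everything else — the Fubini justification and the Stirling-based tail estimate — runs in complete parallel to the odd case handled in Lemma \ref{bound on P}.
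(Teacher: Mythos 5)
Your proof is correct and lands on the same Mellin--Barnes-plus-Stirling estimate as the paper, but you reach the integral representation of the kernel by a different computation. The paper writes the bracketed kernel $\Omega_\nu(y,x)$ in \eqref{Q function} as $-\tfrac{x}{\pi}\tfrac{d}{dy}$ of the even-case kernel and differentiates \eqref{Inverse Mellin in even case} under the integral sign, obtaining $\Omega_\nu(y,x)=\tfrac{1}{4\pi i}\int_{(c)}s\,\Gamma\!\left(\tfrac{s+i\nu}{2}\right)\Gamma\!\left(\tfrac{s-i\nu}{2}\right)(\pi y/x)^{-s-1}ds$, then bounds each term by $O((x/y)^{c+1})$ via Stirling and sums $\sum_n|\widetilde{\lambda}_f(n)|n^{c-1}$ termwise, exactly as in Lemma \ref{bound on P}. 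You instead shift the contour for $xK_{1\pm i\nu}(x)$ starting from \eqref{imt of kinu}, and the residue at $s=\pm i\nu$ produces precisely the subtracted power terms of \eqref{Q function} (your bookkeeping of the factor $2$ from the residue of $\Gamma(\tfrac{s\mp i\nu}{2})$, the prefactor $2^{s-1}$, and the substitution $x=2\pi y/n$ is right); you then fold the sum over $n$ into $1/L(1-s,f)$ inside a single integral rather than bounding termwise. The two representations agree up to the change of variable $s\mapsto s+1$ and the Legendre-type rewriting of the Gamma factors, and your verification that the line $\mathrm{Re}(s)=c\in(-\tfrac32,-\tfrac12)$ avoids all poles (for $\nu$ real or $\nu=it$, $|t|<\tfrac12$) and that $1/L(1-s,f)$ is bounded there by absolute convergence is sound. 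What your route buys is a self-contained derivation that makes the origin of the subtracted terms transparent; what the paper's route buys is that the same formula \eqref{Inverse Mellin in even case} is reused and the identity $\mathcal{Q}_{f_e}=\mathcal{P}_{f_e}'$ is kept visible. Neither has a gap.
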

\begin{proof}
We first consider the function
\begin{align}\label{deltayn}
\Omega_\nu(y, x):=2K_{1+i\nu}\left(\frac{2\pi y}{x}\right)+2K_{1-i\nu}\left(\frac{2\pi y}{x}\right) -\frac{\Gamma(1+i\nu)}{(\pi y/x)^{1+i \nu}} - \frac{\Gamma(1-i\nu)}{(\pi y/x)^{1-i \nu}}.
\end{align}
The above function can be written as
\begin{align}\label{Mellin transform of Delta}
\Omega_\nu(y, x)&=-\frac{x}{\pi}\frac{d}{dy}\left[2K_{i\nu}\left(\frac{2\pi y}{x}\right)-\frac{\Gamma(i\nu)}{(\pi y/x)^{i\nu}} - \frac{\Gamma(-i\nu)}{(\pi y/x)^{-i\nu}} \right]\nonumber\\
&=-\frac{x}{\pi}\frac{d}{dy}\frac{1}{4\pi i}\int_{(c)}\Gamma\left(\frac{s+i\nu}{2}\right)\Gamma\left(\frac{s-i\nu}{2}\right)\left(\frac{\pi y}{x}\right)^{-s}ds\nonumber\\
&=\frac{1}{4\pi i}\int_{(c)}s\Gamma\left(\frac{s+i\nu}{2}\right)\Gamma\left(\frac{s-i\nu}{2}\right)\left(\frac{\pi y}{x}\right)^{-s-1}ds,
\end{align}
where in the penultimate step we have applied \eqref{Inverse Mellin in even case}. Invoking Stirling formula for gamma functions in the above integral, we obtain
\begin{align}\label{kbound}
\Omega_\nu(y, x)=\mathcal{O}\left((x/y)^{c+1}\right).
\end{align}
Therefore, the definition of $\mathcal{Q}_{f_e}(y)$ together with \eqref{kbound} yields
\begin{align*}
\mathcal{Q}_{f_e}(y)&\ll y^{-1-c}\sum_{n=1}^\infty\frac{\widetilde{\lambda}_f(n)}{n^{1-c}}\ll y^{-c-1},
\end{align*}
where in the last step we used the fact that the series $\sum_{n=1}^\infty\frac{\widetilde{\lambda}_f(n)}{n^{1-c}}$ converges as $-\frac{3}{2}<c<-\frac{1}{2}$. This proves the lemma.
\end{proof}
In the following lemma, we provide the Mellin transform of $\mathcal{Q}_{f_e}(y)$, which plays an important role to prove the sufficient condition of the grand Riemann hypothesis for $L(s,f)$, when $f$ is even.
\begin{lemma}\label{Mellin transform}
For any complex number $s$ with $\frac{1}{2}<\mathrm{Re}(s)<\frac{3}{2}$, we have
\begin{align}\label{Mellin transform for even function}
\int_0^\infty y^{-s}\mathcal{Q}_{f_e}(y)dy=\frac{\pi^ss}{2}\frac{\Gamma\left(\frac{-s+i\nu}{2}\right)\Gamma\left(\frac{-s-i\nu}{2}\right)}{L(f,s+1)}.
\end{align}
\end{lemma}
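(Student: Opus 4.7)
The plan is to mimic the proof of Lemma \ref{MT of p}. Set $\psi(s) := \int_0^\infty y^{-s}\mathcal{Q}_{f_e}(y)\,dy$ for $s$ in the stated strip. First, the change of variable $y \mapsto x/n$ in this integral gives $\psi(s) = n^{s-1}\int_0^\infty x^{-s}\mathcal{Q}_{f_e}(x/n)\,dx$. Multiplying by $\lambda_f(n)/n^{s+1}$ and summing over $n\ge 1$, with the interchange of summation and integration justified by the Weierstrass M-test and the Lebesgue dominated convergence theorem (note that $L(s+1,f)$ is absolutely convergent throughout the strip, since $\mathrm{Re}(s+1)>3/2$), one arrives at
\[
L(s+1,f)\,\psi(s)=\int_0^\infty x^{-s}\sum_{n=1}^\infty \frac{\lambda_f(n)}{n^2}\,\mathcal{Q}_{f_e}\!\left(\frac{x}{n}\right)dx.
\]

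Next I would collapse the inner sum. Starting from $\mathcal{Q}_{f_e}(x/n)=\pi\sum_m \frac{\widetilde{\lambda}_f(m)}{m^2}\Omega_\nu(x/n,m)$ with $\Omega_\nu$ as in \eqref{deltayn}, I insert the Mellin--Barnes representation \eqref{Mellin transform of Delta} for each term along a contour $\mathrm{Re}(s')=c \in (-3/2,-1/2)$ and exchange the double sum with the contour integral. On such a contour, the Dirichlet series $\sum_m \widetilde{\lambda}_f(m) m^{s'-1}=1/L(1-s',f)$ and $\sum_n \lambda_f(n) n^{s'-1}=L(1-s',f)$ both converge absolutely and uniformly in $\mathrm{Im}(s')$, by combining the Kim--Sarnak bound \eqref{Sarnak bound} (noting $c<-1/2<-7/64$) with the exponential decay of $\Gamma(\tfrac{s'\pm i\nu}{2})$ from Stirling's formula. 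These reciprocal series cancel exactly, so the double sum collapses to $\pi\,\Omega_\nu(x,1)$.

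It then remains to compute $\pi\int_0^\infty x^{-s}\Omega_\nu(x,1)\,dx$. Here I would recognize \eqref{Mellin transform of Delta} (specialized to $x=1$) as a Mellin inversion formula: after the substitution $u=s'+1$ it reads
\[
\Omega_\nu(y,1)=\frac{1}{2\pi i}\int_{(c+1)}\frac{(u-1)\pi^{-u}}{2}\,\Gamma\!\left(\tfrac{u-1+i\nu}{2}\right)\Gamma\!\left(\tfrac{u-1-i\nu}{2}\right)y^{-u}\,du,
\]
which by Mellin inversion identifies the Mellin transform of $\Omega_\nu(\cdot,1)$ with the gamma-factor expression in the integrand. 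Specializing at $u=1-s$, multiplying by $\pi$, and dividing by $L(s+1,f)$ produces the claimed identity, initially on the sub-strip where the Mellin--Barnes contour can be chosen compatibly and then on the entire target strip $1/2<\mathrm{Re}(s)<3/2$ by analytic continuation, both sides being meromorphic there.

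The main obstacle is making all the interchanges of sum and integral in step two rigorous: the contour $\mathrm{Re}(s')=c$ must simultaneously lie in $(-3/2,-1/2)$ (the validity range of \eqref{Mellin transform of Delta}) and strictly to the left of $-7/64$ so that both reciprocal Dirichlet series converge absolutely under Kim--Sarnak. Since $(-3/2,-1/2) \subset (-\infty,-7/64)$, these constraints are compatible, and the plan goes through cleanly once the absolute convergence is combined with the exponential decay of the gamma factors to justify the exchange of the sums with the contour integral.
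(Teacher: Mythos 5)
Your route is genuinely different from the paper's. The paper first establishes
\begin{equation*}
\int_0^\infty y^{-s-1}\mathcal{P}_{f_e}(y)\,dy=\frac{\pi^{s}\,\Gamma\left(\frac{-s+i\nu}{2}\right)\Gamma\left(\frac{-s-i\nu}{2}\right)}{2\,L(s+1,f)}
\end{equation*}
by the same device as Lemma \ref{MT of p}, and then gets the lemma in one line by integration by parts, using $\mathcal{Q}_{f_e}=\frac{d}{dy}\mathcal{P}_{f_e}$ (the factor $s$ in \eqref{Mellin transform for even function} is precisely the boundary-term-free integration by parts). You instead attack $\mathcal{Q}_{f_e}$ directly: the same $L(s+1,f)$-multiplication trick, the collapse of $L(1-s',f)\cdot L(1-s',f)^{-1}=1$ inside the Mellin--Barnes integral, and Mellin inversion of $\Omega_\nu(\cdot,1)$. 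Both are workable; yours avoids justifying the vanishing of the boundary terms $\left[y^{-s}\mathcal{P}_{f_e}(y)\right]_0^\infty$ at the price of redoing the convergence bookkeeping for $\Omega_\nu$ rather than inheriting it, and your contour constraints ($c\in(-3/2,-1/2)$, hence $c<-7/64$, hence absolute convergence of both Dirichlet series; $\mathrm{Re}(1-s)=c+1$ sweeping $(-1/2,1/2)$ exactly as $\mathrm{Re}(s)$ sweeps $(1/2,3/2)$) are checked correctly.

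However, you should not wave at the last step, because carried out it does \emph{not} reproduce the stated identity but its negative: at $u=1-s$ your integrand carries the factor $(u-1)=-s$, so $\pi\int_0^\infty x^{-s}\Omega_\nu(x,1)\,dx=-\frac{\pi^{s}s}{2}\Gamma\left(\frac{-s+i\nu}{2}\right)\Gamma\left(\frac{-s-i\nu}{2}\right)$, and your argument yields $-\frac{\pi^ss}{2}\Gamma\left(\frac{-s+i\nu}{2}\right)\Gamma\left(\frac{-s-i\nu}{2}\right)/L(s+1,f)$. This discrepancy is not really yours: since $K_\mu'(z)=-\frac12\left(K_{\mu-1}(z)+K_{\mu+1}(z)\right)$, the series \eqref{Q function} actually equals $-\frac{d}{dy}\mathcal{P}_{f_e}(y)$, whereas the lemma's sign (and the paper's one-line proof) corresponds to $+\frac{d}{dy}\mathcal{P}_{f_e}(y)$; the paper's own identity \eqref{Mellin transform of Delta}, $\Omega_\nu(y,x)=-\frac{x}{\pi}\frac{d}{dy}[\cdots]$, carries the compensating minus sign. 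So your derivation is internally consistent with the printed definition \eqref{Q function} and in fact exposes a sign inconsistency in the source, but as a proof of the lemma \emph{as stated} it lands one sign off; you need to either fix the sign convention for $\mathcal{Q}_{f_e}$ or record the minus sign explicitly rather than asserting that the claimed identity drops out.
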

\begin{proof}
Proceeding similarly as was done in the proof of Lemma \ref{MT of p}, we can obtain
\begin{align*}
\int_0^\infty y^{-s-1}\mathcal{P}_{f_e}(y)dy &=  \frac{\pi^{s} \, \Gamma\left(\frac{-s+i\nu}{2}\right)\Gamma\left(\frac{-s-i\nu}{2}\right)}{2 \, L(s+1, f)}.
\end{align*}
We finally perform the integration by parts and utilize the fact that $\mathcal{Q}_{f_e}(y) = \frac{d}{dy} \mathcal{P}_{f_e}(y)$ to conclude our lemma.
\end{proof}

\subsection{Proof of Theorem \ref{Equivalent criterion even}}

We first prove part (a) and for that we assume the bound $\mathcal{Q}_{f_e} (y) = \mathcal{O}\(y^{-3/2+\delta}\right)$ for any $\delta>0$. Multiplying both sides of \eqref{Mellin transform for even function} with $\left(\frac{s^2+\nu^2}{4}\right)$, the identity
\begin{align}\label{deciding identity}
L(f,s+1)\left(\frac{s^2+\nu^2}{4}\right)\int_0^\infty y^{-s}\mathcal{Q}_{f_e} (y) \, dy=\frac{\pi^ss}{2}\Gamma\left(1+\frac{-s+i\nu}{2}\right)\Gamma\left(1+\frac{-s-i\nu}{2}\right)
\end{align}
holds for $\frac{1}{2}<\mathrm{Re}(s)<\frac{3}{2}$. Invoking Lemma \ref{Q bound} and the bound of $\mathcal{Q}_{f_e} (y)$, it can be shown by the similar argument of the proof of Theorem \ref{Equivalent criterion} that the identity is also valid inside the region $-\frac{1}{2}<\mathrm{Re}(s)<\frac{3}{2}$.

Now, the right-hand side of \eqref{deciding identity} has no zeros in $-\frac{1}{2}<\mathrm{Re}(s)<0$. On the other hand, integral is also analytic inside the same region. Therefore, $L(f,s+1)$ has no zeros in $-\frac{1}{2}<\mathrm{Re}(s)<0$. This completes the proof of the grand Riemann hypothesis. 

We next prove part (b) and for that we assume the grand Riemann hypothesis for $L(s, f)$. The argument to prove the estimates of  
$\mathcal{P}_{f_e} (y)$ and $\mathcal{Q}_{f_e} (y)$ are similar, we here provide the proof for $\mathcal{Q}_{f_e} (y)$.

It follows from \cite[Proposition 5.14, p. 113]{Iwaniec} that the grand Riemann hypothesis for $L(s, f)$ implies
\begin{align*}
\sum_{n\leq x}\widetilde{\lambda}_f(n)=\mathcal{O}\left(x^{\frac{1}{2}+\delta}\right),
\end{align*}
as $x \to \infty$, where $\delta$ is any positive real number. Let
\begin{align*}
M(\ell,n):=\sum_{m=\ell}^n\frac{\widetilde{\lambda}_f(m)}{m^2}.
\end{align*}
Applying the Euler's partial summation formula, the above function can be bounded  as
\begin{equation}\label{Bound of M(l, n) even}
M(\ell,n) = \mathcal{O}\left(\ell^{-3/2+\delta}\right).
\end{equation}
Inserting $\Omega_\nu(y,x)$ from \eqref{deltayn} into the definition of $\mathcal{Q}_{f_e} (y)$ in \eqref{Q function}, we have
\begin{equation*}
\mathcal{Q}_{f_e} (y) = \pi \sum_{n=1}^\infty \frac{\widetilde{\lambda}_f(n)}{n^2} \Omega_\nu(y,n).
\end{equation*}
Let $\ell=\lfloor y^{1-\delta}\rfloor$. We next split the above sum into two parts as
\begin{align}\label{qfy in terms s1fy s2fy}
\mathcal{Q}_{f_e} (y) &= \sum_{n=1}^{\ell-1}\frac{\widetilde{\lambda}_f(n)}{n^2}\Omega_\nu(y,n)+\sum_{n=\ell}^{\infty}\frac{\widetilde{\lambda}_f(n)}{n^2}\Omega_\nu(y,n)\nonumber\\
&=: T_1(f,y) + T_2(f,y).
\end{align}
We first evaluate $T_2(f,y)$. We have
\begin{align*}
\sum_{n=\ell}^{N}\frac{\widetilde{\lambda}_f(n)}{n^2}\Omega_\nu(y,n)=\sum_{n=\ell}^{N-1}M(\ell,n) \left(\Omega_\nu(y,n)-\Omega_\nu(y,n+1)\right)-M(\ell,N)\Omega_\nu(y,N).
\end{align*}
Note that the last term in the above equation vanishes as $N\to\infty$, therefore, we obtain
\begin{align}\label{before mvt}
T_2(f,y)&=\sum_{n=\ell}^{\infty}M(\ell,n) \left(\Omega_\nu(y,n)-\Omega_\nu(y,n+1)\right).
\end{align}
The mean value theorem implies that there exists $\lambda_n\in(n,n+1)$  such that 
\begin{align}\label{mvt}
\Omega_\nu(y,n)-\Omega_\nu(y,n+1)=-\frac{d}{dx}\Omega_\nu(y,x)\bigg|_{x= \lambda_n}.
\end{align}
We next find the estimate for the expression on the right-hand side of the above equation. Differentiating both sides of \eqref{Mellin transform of Delta} with respect to $x$, we arrive at
\begin{align}\label{bound on f'}
\frac{d}{dx}\Omega_\nu(y,x) &=- \frac{1}{4 \pi^2 i y}\int_{(c)}s(s+1)\Gamma\left(\frac{s+i\nu}{2}\right)\Gamma\left(\frac{s-i\nu}{2}\right) \left(\frac{x}{\pi y}\right)^{s} \, ds
\ll y^{-c-1}x^c.
\end{align}
The combination of \eqref{Bound of M(l, n) even}, \eqref{before mvt}, \eqref{mvt} and \eqref{bound on f'} together with $\ell=\lfloor y^{1-\epsilon}\rfloor$, yields
\begin{align}\label{final s2fy}
T_2(f,y)&\ll \ell^{-\frac{3}{2}+\epsilon}y^{-c-1}\sum_{n=\ell}^\infty n^c
\ll y^{-\frac{3}{2}+\epsilon},
\end{align}
where in the last step we used the fact $\sum_{n>x}n^{-s}=\mathcal{O}(x^{1-s})$ (cf. \cite[p.~55, Theorem 3.2(c)]{apostal}).

We now estimate $T_1(f,y)$. Invoking the estimate of $K_{z}(x)$ from \cite[p.~255, Formula 10.40.2]{NIST}, we arrive at
\begin{align*}
T_1(f,y)= - \sum_{n=1}^{\ell-1}\frac{\widetilde{\lambda}_f(n)}{n^2} \left\lbrace \(\frac{\pi y}{n}\right)^{i\nu-1}\Gamma(1-i\nu)+\(\frac{\pi y}{n}\right)^{-i\nu-1}\Gamma(1+i\nu) \right\rbrace+\mathcal{O}\left(\frac{e^{-\frac{2\pi y}{\ell}}}{\sqrt{y}}\sum_{n=1}^{\ell-1}\frac{\widetilde{\lambda}_f(n)}{n^{3/2}}\right).
\end{align*}
Therefore, the bound of $\widetilde{\lambda}_f(n)$, followed from \eqref{Sarnak bound} and the value $\ell=\lfloor y^{1-\delta}\rfloor$, yields 
\begin{equation}\label{s1 final bound odd}
T_1(f,y)= - \sum_{n=1}^{\ell-1}\frac{\widetilde{\lambda}_f(n)}{n^2} \left\lbrace \(\frac{\pi y}{n}\right)^{i\nu-1}\Gamma(1-i\nu)+\(\frac{\pi y}{n}\right)^{-i\nu-1}\Gamma(1+i\nu) \right\rbrace+\mathcal{O}\left(y^{-\frac{57}{64}+\delta}e^{-2\pi y^\delta}\right),
\end{equation}
for any $\delta > 0$. Thus combining \eqref{qfy in terms s1fy s2fy}, \eqref{final s2fy} and \eqref{s1 final bound odd}, we conclude \eqref{Bound on Q}. This completes the proof of our theorem. 
\vspace{.3cm}

\subsection*{Acknowledgements} 
The authors would like to show their sincere gratitude to Prof. Atul Dixit for fruitful discussions and suggestions on the manuscript. The first author's research was supported by the SERB-DST CRG grant CRG/2020/002367 and the second author's research was supported by the grant IBS-R003-D1 of the IBS-CGP, POSTECH, South Korea.


\begin{thebibliography}{99}

\bibitem{AS} M.~Abramowitz and I.~A.~Stegun, {\em Handbook of Mathematical Functions, with Formulas, Graphs and Mathematical Tables}, 9th ed., Dover, New York, 1970.

\bibitem{Agarwal} A.~Agarwal, M.~Garg and B.~Maji, {\em Riesz-type criteria for the Riemann Hypothesis}, submitted for publication.

\bibitem{Liu}  T.~Aoki, S.~Kanemitsu and J.~Liu, {\em Number Theory: Dreaming In Dreams - Proceedings Of The 5th China-japan Seminar}, World Scientific (2009).

\bibitem{apostal} T.~M.~Apostol, \emph{Introduction to Analytic Number Theory}, Undergraduate Texts in Mathematics, New York-Heidelberg, Springer-Verlag, 1976.

\bibitem{Berndt} B. C. Berndt, {\em Ramanujan’s Notebooks, Part V}, Springer-Verlag, New York, 1998.

\bibitem{Brychkov} A. Y. Brychkov, {\em Handbook of special functions: derivatives, integrals, series and other formulas}, CRC press, 2008.

\bibitem{Copson} E. T. Copson, {\em Theory of Functions of a Complex Variable}, Oxford University Press, Oxford, 1935.

\bibitem{Dix} A.~Dixit, {\em Character analogues of Ramanujan-type integrals involving the Riemann $\Xi$-function}, Pacific J. Math. {\bf 255}, No. 2 (2012), 317--348.

\bibitem{Dixit3} A.~Dixit, S.~Gupta and A.~Vatwani, {\em A modular relation involving non-trivial zeros of the Dedekind zeta function, and the generalized Riemann hypothesis}, submitted for publication.

\bibitem{Dixit} A.~Dixit, A.~Roy and A.~Zaharescu, {\em Ramanujan-Hardy-Littlewood-Riesz phenomena for Hecke forms}, J. Math. Anal. Appl. {\bf 426} (2015), 594--611.

\bibitem{Dixit2} A.~Dixit, A.~Roy and A.~Zaharescu, {\em Riesz-type criteria and theta transformation analogues}, J. Number Theory {\bf 160} (2016), 385--408.

\bibitem{Hardy} G.~H.~Hardy and J.~E.~Littlewood, {\em Contributions to the theory of the Riemann zeta-Function and the theory of the distribution of primes}, Acta Math., {\bf 41} (1916), 119--196.

\bibitem{Iwaniec} H. Iwaniec and E. Kowalski, {\em Analytic Number Theory}, American Mathematical Society Colloquium Publications, {\bf 53}, American Mathematical Society, Providence, RI, 2004.

\bibitem{Kim} H.~Kim and P.~Sarnak, {\em Refined estimates towards the Ramanujan and Selberg conjectures}, J. Amer. Math. Soc. {\bf 16} (2003), 175--181.

\bibitem{NIST} F.~W.~J.~Olver, D.~W.~Lozier, R.~F.~Boisvert and C.~W.~Clark, eds., {\em NIST Handbook of Mathematical Functions}, Cambridge University Press, Cambridge, 2010.

\bibitem{Ramanujan} S.~Ramanujan, {\em Notebooks of Ramanujan, Vol 2}, Tata Institute of FundamentaI Research, Bombay, 1957.

\bibitem{Riesz} M.~Riesz, {\em Sur l'hypoth{\`e}se de Riemann}, Acta Math., {\bf 40} (1916), 185--190.

\bibitem{Sankaranarayanan} A.~Sankaranarayanan and J.~Sengupta, {\em Zero-density estimate of $L$-functions attached to Maass forms}, Acta Arith., {\bf 127} (2007), 273--284.


\bibitem{temme} N.~M.~Temme, \emph{Special functions: An introduction to the classical functions of mathematical physics}, Wiley-Interscience Publication, New York, 1996.

\bibitem{Titchmarsh} E.~C.~Titchmarsh, {\em The theory of the Riemann zeta function}, Clarendon Press, Oxford, 1986.

\end{thebibliography}
\end{document}